\newcommand     {\Cset}    {{\mathbb C}}
\newcommand     {\A}    {{\mathcal A}}
\newcommand     {\B}    {{\mathcal B}}
\newcommand     {\C}    {{\mathcal C}}
\renewcommand   {\H}    {{\mathcal H}}
\newcommand     {\Q}    {{\mathcal Q}}
\newcommand {\vdim} {\text{\rm{dim}}}
\newcommand {\qdim} {\vdim_q}
\newcommand	{\hatG} {\widehat{G}}
\DeclareMathOperator    {\SU}  {SU}
\DeclareMathOperator	{\gkdim}  {GKdim}
\DeclareMathOperator{\Tr}{Tr}
\DeclareMathOperator{\mult}{mult}
\newtheorem{thm}{Theorem}[section]
\newtheorem{lemma}[thm]{Lemma}
\newtheorem{prop}[thm]{Proposition}
\newtheorem{cor}[thm]{Corollary}
\theoremstyle{definition}
\newtheorem{defn}[thm]{Definition}
\newtheorem{ex}[thm]{Example}
\newtheorem{rem}[thm]{Remark}
\numberwithin{equation}{section}
\begin{document}

\title[Dimension of CQG]
{Polynomial growth of discrete quantum groups, topological dimension of the dual and $^*$-regularity of the Fourier algebra}
\author[A.~D'Andrea]{Alessandro D'Andrea}
\email{dandrea@mat.uniroma1.it}
\author[C.~Pinzari]{Claudia Pinzari}
 \email{pinzari@mat.uniroma1.it}
\address{Dipartimento di Matematica, Universit\`a degli Studi di
Roma ``La Sapienza'', Roma}
\author[S.~Rossi]{Stefano Rossi}
\email{rossis@mat.uniroma2.it}
\address{Dipartimento di Matematica, Universit\`a degli Studi di
Roma ``Tor Vergata''}

\begin{abstract}
Banica and Vergnioux have shown that the dual discrete quantum group of a compact simply connected Lie group has polynomial growth of order the real manifold dimension. We extend this result to a general compact group and its topological dimension, by connecting it with    the Gelfand-Kirillov dimension of an algebra. Furthermore, we show that polynomial growth for a compact quantum group $G$ of Kac type implies $^*$--regularity of the Fourier algebra $A(G)$, that is every closed ideal of $C(G)$ has a dense intersection with $A(G)$. In particular, $A(G)$ has a unique $C^*$--norm.
\end{abstract}

\maketitle
\centerline{\it Dedicated to the memory of John E. Roberts}\bigskip
\section{Introduction}

The notion of polynomial growth for the dual of a compact quantum group $G$ of Kac type was introduced by Vergnioux \cite{Vergnioux}. It turns out to be a growth property of the vector dimension function  $\dim$ on the associated representation ring $R(G)$, and, as such, it can be generalised to the non-Kac type case.
It has since attracted much interest, one of the reasons being that   it is a natural generalisation of the corresponding classical notion for discrete groups.

In particular, Banica and Vergnioux have shown in \cite{BV, Vergnioux} that if $G$ is a connected, simply connected, compact Lie group then $(R(G), \dim)$ has polynomial growth, and the order of the growth equals the manifold dimension of $G$. 
This suggests that  polynomial growth of $(R(G), \dim)$ may be understood as a non-commutative analogue of the topological dimension.

One of the aims of this note is to support this viewpoint. We first connect growth of a quantum group with the more general notion of growth of an algebra in the sense introduced by Gelfand and Kirillov and show that this leads to an extension of Banica and Vergnioux theorem to all compact groups.
 
More specifically, we recall that Gelfand and Kirillov, motivated by the isomorphism problem of Weyl division algebras, introduced what is now called the GK dimension of an algebra $\A$ \cite{GK}. The GK dimension measures the best polynomial growth rate of $\A$.
By definition, every algebra of polynomial growth is the inductive limit of finitely generated algebras of finite GK dimension.

Gelfand-Kirillov dimension equals the Krull dimension for finitely generated commutative algebras. For commutative domains, it further equals the transcendence degree of the corresponding fraction field over the base field. 
  
We remark that if $G$ is a compact quantum group, the GK dimension of the canonical dense Hopf algebra equals the growth rate of the vector dimension function in the sense of Banica and Vergnioux. We further show that in the classical case, the GK dimension also equals the Lebesgue topological dimension, and, as mentioned above, this extends Banica and Vergnioux's theorem to general compact groups. The further connection of the GK dimension with the classical transcendence degree also recovers a theorem of Takahashi on the topological dimension of a compact group \cite{Takahashi}, in turn extending 
a classical result of Pontryagin for compact abelian groups on the equality between dimension and rank of the dual group.
  
The GK dimension of the group algebra of a discrete group of polynomial growth coincides with the Bass rank of the group, by well-known results of Bass and Gromov \cite{Krause_Lenagan}.

Motivated by these facts, we introduce the {\it topological dimension} of a compact quantum group $G$ as the Gelfand-Kirillov dimension of the dense Hopf algebra.
For example, the compact quantum groups with the same representation theory as that of a given compact Lie group, in the sense of \cite{NY}, have finite topological dimension. 
Saying that ($R(G), \dim$) has polynomial growth means precisely that the associated Hopf $C^*$--algebra is the inductive limit of Hopf $C^*$--algebras of compact matrix quantum groups of finite topological dimension.

In the second part of the paper we   discuss a structural consequence of  polynomial growth.
It is known that compact quantum groups $G$ of subexponential growth are coamenable \cite{Banica99, BV}.
However, we gain further information in the case of polynomial growth.

More specifically, the Fourier algebra $A(G)$ of a compact quantum group is known to be a Banach algebra. If we moreover assume that $G$ is of Kac type, $A(G)$ becomes an 
 {\it involutive} Banach algebra with involution obtained extending the involution of the canonical dense Hopf subalgebra ${\mathcal Q}_G$. The maximal  completion $C_{\text{max}}(G)=C^*({\mathcal Q}_G)$  can also be regarded as   the enveloping $C^*$--algebra of    $A(G)$. There is a natural surjective map between the primitive ideal spaces $\Psi: \text{Prim}{C_{\text{max}}(G)}\to\text{Prim}{A(G)}$ which is continuous  when either space is endowed with the hull-kernel topology. We show that if $G$ is of Kac type and  of polynomial growth, $\Psi$ is a homeomorphism. This in particular implies that   the Jacobson topology on $\widehat{C_{\text{max}}(G)}$ coincides with the Jacobson topology induced by the Fourier subalgebra $A(G)$.

In the framework of Banach $^*$--algebras, this property is known as $^*$--regularity. 
If $A$ is such an algebra, an equivalent statement of $^*$--regularity is that every closed ideal of $C^*(A)$,   has dense intersection with $A$. As an   important general consequence of this property we have  that   $A$ has a unique $C^*$--norm \cite{Barnes81, Barnes83,   Boidol84}. This in particular applies to $A(G)$, as opposed to the canonical dense Hopf algebra which has many $C^*$--norms already for $G={\mathbb T}$. The property of 
$^*$--regularity has been first shown for the pair of the
group algebra   $L^1(\Gamma)$ of a locally compact group with Haar measure of polynomial growth and its
 $C^*$--envelope $C^*(\Gamma)$  \cite{BLSV}. In this sense,   our result is a discrete quantum analogue.

To conclude, it is perhaps worth emphasizing that our quantum group approach provides a new   look at $^*$--regularity   as a geometric property, in that it may now be   interpreted  as  a regularity condition of the spectrum of $C(G)$ enjoyed by the subclass of those (Kac-type) $G$ which can be approximated 
by  quotients of finite  topological dimension.

The paper is organised as follows. Sect. \ref{prel} is dedicated to preliminaries on GK dimension and compact quantum groups. In Sect. \ref{banver} we compute GK dimension for compact groups, derive the generalised  Banica-Vergnioux theorem, and introduce the notion of topological dimension for a compact quantum group. Its basic properties are discussed in Sect. \ref{basic}.
Sect. \ref{fourier} deals with the Fourier algebra of a compact quantum group. We describe  an approach in terms of representations which parallels the classical treatment in the book by Hewitt and Ross \cite{HR}. In Sect. \ref{growth} we show our $^*$--regularity result, and finally in Sect. \ref{nonkac} we comment on the problem of  extending $^*$--regularity to the non-Kac-type case, that we shall hopefully study elsewhere.
 
\section{Preliminaries on the GK dimension and compact quantum groups}\label{prel}

\subsection{Algebras of polynomial growth and GK dimension}\label{PolyGK}
Let $\A$ be a unital algebra over a field $k$. 
We recall the definition of GK dimension of $\A$ \cite{GK}.
Let $V$ be a subspace of $\A$ and form the subspace of elements that can be written as sums of products of at most $n$ elements of $V$,
$$V_n=\Sigma_{k=0}^n V^k.$$ 
 
\begin{defn}
We shall say that $\A$ has {\em polynomial growth} if for every finite dimensional subspace $V$ of $\A$ there is $\gamma\in{\mathbb R}^+$ such that $$\dim(V_n)=O(n^\gamma).$$
\end{defn}
If $\A$ has polynomial growth one can compute the infimum of polynomial exponents $\gamma$ associated to $V$ with the formula
$$\inf\{\gamma>0: \dim(V_n)=O(n^\gamma) \}=\overline{\lim}_{n\to\infty}\frac{\log \dim(V_n)}{\log(n)}.$$
The GK dimension of $\A$ is defined by
$$\gkdim(\A)=\sup_V\overline{\lim}_{n\to\infty}\frac{\log \dim(V_n)}{\log n}.$$
It is easy to see that $\dim(V_n)$ grows at most exponentially and that $V$ generates a finite-dimensional algebra if and only if $\dim(V_n)=\dim(V_{n+1})$ for some $n$. Hence either $\dim(V_n)$ is eventually constant or $\dim(V_n)\geq n+1$. Thus $\gkdim(\A)$ takes no value in the interval $(0, 1)$, and $\gkdim(\A)=0$ if and only if $\A$ is inductive limit of finite dimensional subalgebras.  It is also known that no real number in $(1,2)$  either can arise as the GK dimension of an algebra, hence $\gkdim(\A)$ is either $0$, $1$ or $\geq2$ \cite{Krause_Lenagan}. GK dimension seems to take integral or infinite values on all known algebras admitting a Hopf algebra structure \cite{Zhuang}.

In the case where $\A$ is a finitely generated algebra, polynomial growth takes a simpler form. First, it suffices to verify it only on a finite dimensional subspace $V$ generating $\A$ as an algebra. Indeed, any other finite dimensional subspace $W$ is contained in some $V_s$ and therefore  $W_n\subset V_{sn}$ for all $n$. If we know that $\dim(V_n)=O(n^\gamma)$ then $\dim(W_n)=O(n^\gamma)$ as well and the growth exponent $\overline{\lim}_{n\to\infty}\frac{\log \dim(W_n)}{\log n}$ of $W$ is bounded above by that of  $V$; consequently, the growth exponent does not depend on the choice of the generating subspace and equals $\gkdim(\A)$. This also shows that every finitely generated algebra of polynomial growth has finite $\gkdim$.
Notice that if $\A$ is the inductive limit of subalgebras $\A_\gamma$ then 
$\gkdim(\A)=\lim_\gamma\gkdim(\A_\gamma).$
In general, a polynomial growth algebra is the inductive limit of finitely generated algebras with finite GK dimension.

In fact, for some applications, we will use the following stronger notion of polynomial growth.
\begin{defn}
Let $\A$ be a finitely generated algebra. We shall call $\A$ of {\it strict polynomial growth of degree $\gamma\in{\mathbb R}^+$} if there is a f.d. generating subspace $V$ and constants $c$, $d>0$ such that eventually
$$cn^\gamma\leq \dim(V_n)\leq dn^\gamma.$$
\end{defn}

Obviously,
if $\A$ has strict polynomial growth of degree $\gamma$ then   $\gkdim(\A)=\gamma$. 

\begin{prop}
If strict polynomial growth of degree $\gamma$ holds for a generating subspace $V$ then it holds for all other generating subspaces. 
\end{prop}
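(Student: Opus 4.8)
The plan is to exploit the fact that any two finite-dimensional generating subspaces of a finitely generated algebra are mutually ``comparable'' in a precise quantitative sense, transport the two-sided estimate through this comparison, and then use monotonicity of $n\mapsto\dim(V_n)$ to repair the loss of sharpness caused by rescaling the index.

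First I would fix a second finite-dimensional generating subspace $W$ of $\A$. Since $V$ generates $\A$ and $W$ is finite-dimensional, there is $s\in\Nset$ with $W\subseteq V_s$ (each element of a basis of $W$ lies in some $V_{n_i}$, and $V_s$ is a subspace for $s=\max_i n_i$); symmetrically there is $t\in\Nset$ with $V\subseteq W_t$, and one may take $s,t\geq 1$. From $W\subseteq V_s$ one deduces $W^k\subseteq V_{sk}$ for every $k$, since a product of $k$ elements of $V_s$ expands, by distributivity, into a sum of products of at most $sk$ elements of $V$; using that $V_{sk}\subseteq V_{sn}$ for $k\leq n$, this gives $W_n=\sum_{k=0}^n W^k\subseteq V_{sn}$. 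In exactly the same way $V_n\subseteq W_{tn}$.

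The upper bound is now immediate: $\dim(W_n)\leq\dim(V_{sn})\leq d\,(sn)^\gamma=(d s^\gamma)\,n^\gamma$ for all large $n$. For the lower bound, $\dim(W_{tn})\geq\dim(V_n)\geq c\,n^\gamma$ for large $n$, i.e.\ $\dim(W_m)\geq c\,(m/t)^\gamma$ whenever $m$ is a large multiple of $t$. To pass to arbitrary large $m$ I would invoke that $m\mapsto\dim(W_m)$ is non-decreasing (this is the monotonicity already used for $V$ in the discussion preceding the statement): setting $q=\lfloor m/t\rfloor$, one has $W_{tq}\subseteq W_m$, and $q\geq m/(2t)$ once $m\geq 2t$, whence $\dim(W_m)\geq\dim(W_{tq})\geq c\,q^\gamma\geq \bigl(c/(2t)^\gamma\bigr)\,m^\gamma$ for all large $m$. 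Taking $c'=c/(2t)^\gamma$ and $d'=d s^\gamma$ gives $c'\,m^\gamma\leq\dim(W_m)\leq d'\,m^\gamma$ eventually, which is strict polynomial growth of degree $\gamma$ for $W$.

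There is no genuine obstacle here; the only delicate point is the final step, where the naive substitution $n\mapsto m/t$ only controls $\dim(W_m)$ along the progression $t\Nset$, and one must use monotonicity to interpolate over the gaps in such a way that the exponent $\gamma$ is unaffected and only the multiplicative constant deteriorates. It is worth recording at the outset that $V_n$ (hence $W_n$) is non-decreasing in $n$, as this is what legitimizes both the inclusions $V_{sk}\subseteq V_{sn}$ for $k\leq n$ and this final interpolation.
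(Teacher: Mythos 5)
Your argument is correct and follows essentially the same route as the paper: the upper bound via $W_n\subseteq V_{sn}$ (which the paper treats as already established in the preceding discussion) and the lower bound via $V_m\subseteq W_{tm}$ together with monotonicity of $n\mapsto\dim(W_n)$ and a floor-of-$n/t$ argument, differing only in how the multiplicative constant is extracted.
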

\begin{proof}
We have already shown independence of the right inequality. For the left inequality, let, as before, $W$ be another generating subspace. There is a positive integer $t$ such that $W_{tm}\supset V_m$ for all $m$.
For a fixed $n\in{\mathbb N}$ choose $m$ such that $tm\leq n<(m+1)t$. Then $W_n\supset W_{tm}\supset V_m$. Hence $\dim(W_n)\geq cm^\gamma\geq\frac{c}{t^\gamma}(n-t)^\gamma\geq c' n^\gamma$.
\end{proof}

If $\A$ is a commutative algebra, its GK dimension reduces to   classical notions. More precisely, we recall the following result.  
\begin{thm}\label{gkcommutative}
If $\A$ is a commutative unital $k$-algebra then $\A$ is of polynomial growth and $\gkdim(\A)$ is either a non-negative integer or infinite. More precisely,
\begin{itemize}
\item[{\rm a)}]
if $\A$ is finitely generated, $\gkdim(\A)$ is finite and equals the Krull dimension $d$ of $\A$. In fact, $\A$ has strict polynomial growth.
\item[{\rm b)}]
If $\A$ has no zero divisors then $\gkdim(\A)=\text{tr.deg}(Q(\A))$, the transcendence degree of the fraction field of  $\A$ over $k$.
\end{itemize}
\end{thm}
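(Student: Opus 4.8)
The plan is to reduce the statement to the finitely generated case and then quote classical commutative dimension theory. Every commutative unital algebra $\A$ is the directed union of its finitely generated unital subalgebras $\A_\alpha$, and any finite dimensional subspace $V\subseteq\A$ already lies in one of them, with $V_n$ the same whether formed in $\A$ or in that $\A_\alpha$. Hence, granting part a), $\A$ has polynomial growth, and by the continuity of $\gkdim$ along inductive limits recalled above $\gkdim(\A)=\sup_\alpha\gkdim(\A_\alpha)=\sup_\alpha\big(\text{Krull dim }\A_\alpha\big)$ is a non-negative integer or $\infty$. If in addition $\A$ is a domain, so is each $\A_\alpha$; since every finite subset of $Q(\A)$ lies in $Q(\A_\alpha)$ for a suitable $\alpha$ and $Q(\A_\alpha)\subseteq Q(\A)$, we get $\text{tr.deg}(Q(\A))=\sup_\alpha\text{tr.deg}(Q(\A_\alpha))$, so b) for general domains will follow from b) for finitely generated ones.

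So assume $\A$ finitely generated of Krull dimension $d$. By Noether normalization there are algebraically independent $y_1,\dots,y_d\in\A$ with $\A$ module-finite over the polynomial subalgebra $R=k[y_1,\dots,y_d]$, say $\A=\sum_{i=1}^r Re_i$ with $e_1=1$. By the Proposition above it is enough to verify strict polynomial growth for a single well-chosen generating subspace, so fix a finite dimensional $V$ with $1,y_1,\dots,y_d,e_1,\dots,e_r\in V$; these elements already generate $\A$ as an algebra since $\A=\sum_i Re_i$. Put $W=\mathrm{span}\{1,y_1,\dots,y_d\}\subseteq V$, the standard generating subspace of the polynomial ring $R$, so that $W_n$ is the span of the monomials in the $y_j$ of total degree $\le n$ and $\dim(W_n)=\binom{n+d}{d}$. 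Since $W\subseteq V$ we have $W_n\subseteq V_n$, which gives the lower bound $\dim(V_n)\ge\binom{n+d}{d}\ge c_1 n^d$ for $n$ large.

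For the upper bound I would control the degrees appearing in products. Write each vector of a basis of $V$ as $\sum_i g_ie_i$ with $g_i\in R$, let $D_0$ bound the total degrees of all these $g_i$, and fix structure constants $e_ie_j=\sum_k f_{ijk}e_k$ with the total degrees of the $f_{ijk}$ bounded by $D$. Then an induction on $j$ shows $V^j\subseteq\sum_{i=1}^r W_{cj}e_i$ with $c=D_0+D$: the cases $j=0,1$ are immediate, and if $u\in V^j$ is $\sum_k h_ke_k$ with $\deg h_k\le cj$ and $v\in V$ is $\sum_i g_ie_i$, then $vu=\sum_{i,k,l}g_ih_kf_{ikl}e_l$ with $\deg(g_ih_kf_{ikl})\le D_0+cj+D=c(j+1)$. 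Hence $V_n\subseteq\sum_i W_{cn}e_i$ and $\dim(V_n)\le r\binom{cn+d}{d}\le c_2 n^d$ for $n$ large. Combined with the lower bound, $\A$ has strict polynomial growth of degree $d$, so $\gkdim(\A)=d$, which is a). For b), when $\A$ is a finitely generated domain the Krull dimension coincides with $\text{tr.deg}(Q(\A))$ — a classical consequence of Noether normalization — so a) yields $\gkdim(\A)=\text{tr.deg}(Q(\A))$.

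The main obstacle I anticipate is precisely the bookkeeping in the last induction: one has to extract a single linear bound $cn$ on the $R$-degrees that can occur in an arbitrary product of at most $n$ elements of $V$ — uniformly over all such products and over the filtration $V_n=\sum_{k\le n}V^k$, not just over a homogeneous component — and this requires choosing the module generators $e_i$, the structure constants and the presentation of $V$ consistently. Everything else is standard: Noether normalization and the equality of Krull dimension with transcendence degree for finitely generated domains on the commutative-algebra side, and the already-recorded behaviour of $\gkdim$ under inductive limits on the other.
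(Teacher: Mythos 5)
Your argument is correct, and its mathematical core is the same as the paper's: Noether normalization produces a polynomial subalgebra $R=k[y_1,\dots,y_d]$ over which $\A$ is module-finite, and one then traps $\dim(V_n)$ between $\dim(W_n)=\binom{n+d}{d}$ and $r\cdot\dim(W_{\text{linear in }n})$ for the standard generating subspace $W$ of $R$, giving strict polynomial growth of degree $d$. The difference is self-containedness rather than strategy: the paper delegates most of the statement to \cite{Krause_Lenagan} (Prop.~4.2, Cor.~4.4, Thm.~4.5) and only writes out the strict-growth refinement, quoting the inequality $\dim(V_n)\le r\dim(W_{2n-1})$ from the proof of Lemma~4.3 there, whereas you rederive an analogous bound $V_n\subseteq\sum_i W_{cn}e_i$ with $c=D_0+D$ by explicit bookkeeping with module generators and structure constants, and you then read off $\gkdim(\A)=d=\,$Krull dimension directly from strict growth instead of citing it. You also spell out the two reductions the paper leaves to the reference: polynomial growth and integrality of $\gkdim$ for a general commutative algebra via the directed family of finitely generated subalgebras (using that $V_n$ is the same computed in $\A$ or in any subalgebra containing $V$, and that $\gkdim$ respects inductive limits), and part b) for arbitrary domains via $\text{tr.deg}(Q(\A))=\sup_\alpha\text{tr.deg}(Q(\A_\alpha))$ combined with the classical equality of Krull dimension and transcendence degree for finitely generated domains. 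Your induction $V^{j}\subseteq\sum_i W_{cj}e_i$ is sound: it suffices to verify it on products of basis vectors, and although the representation $u=\sum_k h_ke_k$ with $\deg h_k\le cj$ need not be unique, any choice works. In short, what your version buys is independence from the cited lemmas at the cost of the degree bookkeeping you flag; the paper's version buys brevity by outsourcing exactly that bookkeeping.
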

\begin{proof} A possible reference for this theorem is \cite[Chapter 4]{Krause_Lenagan}. More precisely, b) follows from Cor. 4.4 and Prop. 4.2, while a), except the property of strict polynomial growth, is stated in  Theorem 4.5. For the last property, notice that the proof of Lemma 4.3 of the same book shows that if $B\subset A$ is an inclusion of finitely generated commutative algebras such that $A$ is finitely generated as a $B$-module then $\dim(V_n)\leq r\dim(W_{2n-1})$ where $W$ and $V$ are generating subspaces of $B$ and $A$ respectively, and $V$ contains a set of generators of $A$ as a $B$-module, whose cardinality is denoted by $r$. Assuming in addition that $V$ contains $W$ as well, we also gain $\dim(V_n)\geq\dim(W_n)$.
Thus if $B$ has strict polynomial growth then so does $A$ and with the same degree. As in the proof of \cite[Theorem 4.5]{Krause_Lenagan}, we may now appeal to Noether's normalisation lemma and choose for $B$ the polynomial algebra $k[x_1,\dots, x_d]$.
\end{proof}

Another important class of examples arises from discrete groups.  Polynomial growth of the group algebra ${\mathbb C}\Gamma$ of a finitely generated discrete group $\Gamma$ reduces to the usual notion of polynomial growth for $\Gamma$. It is well known that groups of polynomial growth have been studied, among others, by Bass, Milnor, Wolf and Gromov. In particular, Bass and Wolf showed that nilpotent groups have strict polynomial growth of degree given by the Bass rank, and Gromov proved that every polynomial growth group is virtually nilpotent. See \cite{Krause_Lenagan} for references. 
\bigskip

\subsection{Compact quantum groups}
We briefly  recall the  notion of a compact quantum group along with the main properties, as developed by Woronowicz \cite{WoroCMP}, see also \cite{MaesVanDaele,NTbook}.

A compact quantum group is defined by a pair $G=(Q, \Delta)$ where $Q$ is a unital $C^*$--algebra and $\Delta$ is a coassociative unital $^*$--homomorphism $$\Delta: Q\to Q\otimes Q$$ such that $I\otimes Q\Delta(Q)$ and $Q\otimes I\Delta(Q)$ are dense in the minimal tensor product $Q\otimes Q$. 

The basic example is given by the algebra $C(G)$ of continuous functions on a compact group, and every commutative example is of this form. It is customary to keep the same notation $C(G)$ for $Q$ also when $Q$ is not commutative and we shall occasionally follow this convention.
 
Another important class of examples is provided by discrete groups. If $\Gamma$ is such a group then the group $C^*$--algebra
$C^*(\Gamma)$, which is the completion of the group algebra ${\mathbb C}\Gamma$ in the maximal $C^*$--norm, becomes a compact quantum group with coproduct $\Delta(\gamma)=\gamma\otimes\gamma$, $\gamma\in \Gamma$. We may also consider the reduced $C^*$--completion $C^*_{\text{red}}(\Gamma)$ and still obtain a compact quantum group.
We shall refer to these as cocommutative examples, since the coproduct is invariant under the automorphism that exchanges the factors of $C^*(\Gamma)\otimes C^*(\Gamma)$. Furthermore, every cocommutative compact quantum group can be obtained as the completion of ${\mathbb C}\Gamma$ with respect to some $C^*$--norm, which is bounded by the reduced and the maximal norm. This fact extends to general compact quantum groups where elements of $\Gamma$ are replaced by the matrix coefficients of representations of $G$, that we recall next, and is a consequence of Woronowicz density theorem. The quantum group $G$ is called coamenable if the reduced and maximal norm coincide.
  
A {\em representation} of $G$ can be defined as a unitary element $u\in{\B}(H)\otimes Q$, where $H$ is a finite dimensional Hilbert space, satisfying $\Delta(u_{\xi, \eta})=\sum_r u_{\xi, e_r}\otimes u_{e_r,\eta}$, where  $u_{\xi, \eta}$, the {\em matrix coefficients} of $u$, are defined by $u_{\xi, \eta}=(\xi^*\otimes 1) u (\eta\otimes 1)$, with $\xi$ and $\eta$ vectors of $H$ here regarded as   operators $\Cset\to H$ between Hilbert spaces, and $(e_r)$ is an orthonormal basis of $H$. The more general notion of invertible representation is meaningful, and in fact invertible representations arise naturally in the construction of the conjugate representation, that we next recall. However every invertible representation turns out to be equivalent to a unitary one.
Henceforth the term representation will always mean a unitary representation on a finite dimensional Hilbert space. 

An intertwiner between two   representations $u$ and $u'$ is a linear operator $T$ from the space of $u$ to that of $u'$   such that $(T\otimes I) u=u' (T\otimes I)$. Two representations are equivalent if there is an invertible intertwiner, which can always be chosen to be unitary. 

The category whose objects are representations of $G$ and whose arrows are intertwiners is a tensor $C^*$--category with conjugates in the sense of, e.g., \cite{NTbook}. Subrepresentations, direct sums of representations as well as irreducible representations are defined in the natural way. The tensor product  $u\otimes u'$ of two representations acts on the tensor product Hilbert space, and is determined by $$(u\otimes u')_{\xi\otimes\xi', \eta\otimes\eta'}=u_{\xi,\eta}u'_{\xi',\eta'}.$$ Furthermore,  the conjugate $\overline{u}$ of any representation $u$ is determined, up to unitary equivalence, by an invertible antilinear operator $j$ from the space of $u$ to that of $\overline{u}$ satisfying
$$\overline{u}_{\phi,\psi}=(u_{j^{-1}\phi, \ j^*\psi})^*.$$ It follows that $$R=\sum_r je_r\otimes e_r\in(\iota, \overline{u}\otimes u), \quad \overline{R}=\sum_s j^{-1}f_s\otimes f_s\in (\iota, u\otimes\overline{u}),$$ with $\iota$ the trivial representation. 
Every representations can be decomposed as a direct sum of irreducible representations, in a  unique way up to equivalence. 

We shall denote by $\hatG$ a fixed complete set of inequivalent irreducible representations. Notice that later on we shall use the same symbol for the discrete quantum group dual to $G$, but this should not cause confusion.

The linear span $\Q_G$ of matrix coefficients of representations is a canonical dense $^*$--subalgebra of $Q_G$, which has the structure of a Hopf $^*$--algebra \cite{WoroCMP, cqg}. The collection $\{v_{r s}, v\in\hatG\}$ of all matrix coefficients with respect to a choice of orthonormal bases is linearly independent and spans $\Q_G$.

In the classical case, representations describe usual unitary representations and $\Q_G$ is the Hopf algebra of representative functions on $G$. If $G$ is cocommutative and arises from $\Gamma$ then every element of $\Gamma$ is a one-dimensional representation, and these are the only irreducible representations. 

Most importantly, $Q_G$ has a unique {\em Haar state} $h$, which means that $h$ is a state satisfying the invariance condition $h\otimes 1(\Delta(a))=h(a)I=1\otimes h(\Delta(a))$ for all $a\in Q_G$. It is  determined by requiring that it annihilate all coefficients of non-trivial irreducible representations.

For any irreducible $u$ with conjugate defined by $j_u$, set  $F_u=j_u^*j_u$. This operator depends on the choice of $\overline{u}$ and $j_u$ only up to a positive scalar factor; we will henceforth normalise the choice of $j_u$ so that $\Tr(F_u)=\Tr(F_u^{-1})$, which yields a positive invertible operator $F_u$ canonically associated with $u$. The scalar $\qdim(u)=\Tr(F_u)\geq \dim(u)$ is the {\em quantum dimension} of $u$. The quantum group $G$ is called of Kac type if $F_u=I$ for all $u$, which is equivalent to $h$ being a trace or to $\qdim(u)=\dim(u)$ holding for all representations.
 
The Haar state satisfies the following {\em orthogonality relations} for matrix coefficients of irreducible representations, see \cite{WoroCMP},
\begin{equation}\label{orth1}
h(v_{\psi,\phi}^*u_{\xi,\eta})=\delta_{v,u} \frac{1}{\qdim(u)}\langle\xi, F_u\psi\rangle\langle\phi,\eta\rangle=\delta_{v,u} \frac{1}{\qdim(u)}\Tr(F_u\Theta^v_{\psi,\phi}\Theta^u_{\eta, \xi})
\end{equation}
where $\Theta^u_{\eta,\xi}$ is the rank 1 operator $\Theta^u_{\eta,\xi}(\zeta)=\langle \xi,\zeta\rangle\eta$ and $\Tr$ is the non-normalised trace. Similarly
\begin{equation}\label{orth2}
h(u_{\xi,\eta}v_{\psi,\phi}^*)=\delta_{v,u}\frac{1}{\qdim(u)}\langle\xi,\psi\rangle\langle\phi,F_u^{-1}\eta\rangle=\delta_{v,u}\frac{1}{\qdim(u)}\Tr(F_u^{-1}\Theta^u_{\eta,\xi}\Theta^v_{\psi,\phi}).
\end{equation}
  
\section{A theorem of Banica and Vergnioux}\label{banver}

The following observation makes Theorem \ref{gkcommutative} more precise for function algebras of compact groups.

\begin{thm}\label{corollary}
If  $G$ is a compact group then  
\begin{itemize}
\item[{\rm a)}]
$\gkdim(\Q_G)$ equals the Lebesgue topological dimension of $G$,
\item[{\rm b)}]
if $G$ is a Lie group then $\gkdim(\Q_G)$ equals the dimension of $G$ as a real manifold.
\end{itemize}
\end{thm}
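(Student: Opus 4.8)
The plan is to deduce everything from the commutative case, Theorem~\ref{gkcommutative}, in two moves: first settle the case of a Lie group, then bootstrap to an arbitrary compact group by an inverse limit argument.

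First I would treat a compact Lie group $G$. Fixing a faithful finite dimensional unitary representation $\pi$ (one exists since a compact Lie group is linear), every irreducible representation of $G$ is contained in some $\pi^{\otimes a}\otimes\overline{\pi}^{\otimes b}$, so its matrix coefficients are polynomials in those of $\pi$ and $\overline{\pi}$; hence $\Q_G$ is a finitely generated commutative $\Cset$-algebra. The next point is to recognise $\Q_G$ as the coordinate ring $\mathcal{O}(G_{\Cset})$ of the complexification $G_{\Cset}$, a reductive complex linear algebraic group, the isomorphism being restriction of regular functions along the Zariski-dense inclusion $G\subset G_{\Cset}$. Consequently the Krull dimension of $\Q_G$ equals $\dim_{\Cset}G_{\Cset}$---the complex dimension of the identity component, shared by its finitely many cosets---which in turn equals $\dim_{\Rset}G$. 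By Theorem~\ref{gkcommutative}~a) this gives $\gkdim(\Q_G)=\dim_{\Rset}G$, in fact with strict polynomial growth of that degree. Since the Lebesgue covering dimension of a manifold is its manifold dimension, this already proves b), and gives a) in the Lie case.

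Next, for an arbitrary compact group $G$, I would write $G=\varprojlim_N G/N$, the limit running over the closed normal subgroups $N$ of $G$ with $G/N$ a Lie group. This family is filtered: the kernel of any finite dimensional representation of $G$ is such an $N$, since $G/N$ is then a closed subgroup of a unitary group, and the family is stable under finite intersections because $G/(N_1\cap N_2)$ embeds into $G/N_1\times G/N_2$. Dually one gets $\Q_G=\varinjlim_N\Q_{G/N}$, the connecting maps being the injections of algebras induced by the quotient maps $G\to G/N$. Since $\gkdim$ does not increase under passage to a subalgebra and behaves under inductive limits as recalled in Section~\ref{PolyGK}, we obtain
\[
\gkdim(\Q_G)=\sup_N\gkdim(\Q_{G/N})=\sup_N\dim_{\Rset}(G/N),
\]
the last equality by the Lie case. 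Finally I would invoke the classical fact that the Lebesgue covering dimension of a compact group equals the supremum of the dimensions of its Lie group quotients (Pontryagin's theorem in the abelian case; see also \cite{Takahashi} and the references there), which identifies the right-hand side with the Lebesgue dimension of $G$ and proves a); b) is then its specialisation to Lie groups.

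The step I expect to be the real obstacle is the topological input just used---matching the covering dimension of $G$ with $\sup_N\dim(G/N)$---together with, in the Lie case, correctly pinning down the Krull dimension of $\Q_G$ through the complexification so that it comes out to be $\dim_{\Rset}G$ and not, say, twice that. The remaining arguments are routine bookkeeping on top of Theorem~\ref{gkcommutative} and the inverse-limit description of compact groups.
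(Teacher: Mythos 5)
Your proposal is correct and follows essentially the same route as the paper: the Lie case is handled by identifying $\Q_G$ with the coordinate ring $\mathcal{O}(G_{\Cset})$ of the complexification and invoking Theorem~\ref{gkcommutative}~a) to equate GK dimension with Krull dimension $=\dim_{\Cset}G_{\Cset}=\dim_{\Rset}G$, and the general case follows by writing $G$ as an inverse limit of Lie quotients, using the inductive-limit behaviour of $\gkdim$ and the corresponding limit behaviour of Lebesgue dimension. The only difference is cosmetic: you spell out the filtered system of Lie quotients and cite the Pontryagin--Takahashi fact explicitly, while the paper also records an alternative argument via Takahashi's transcendence-degree theorem and Theorem~\ref{gkcommutative}~b), which you do not need.
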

\begin{proof}
$\Q_G$ is inductive limit of finitely generated algebras as $G$ is  the inverse limit of compact Lie groups. Lebesgue dimension commutes with inverse limits of compact Hausdorff spaces, while GK dimension commutes with inductive limits of algebras.
Also, for a compact Lie group, the Lebesgue dimension coincides with the real manifold dimension. These remarks show that a) follows from b).
  
In order to show b), assume $G$ is a compact Lie group. The complexification $G_{\mathbb C}$ of $G$ is an algebraic group and $\Q_G$ identifies with the coordinate ring ${\mathcal O}(G_{\mathbb C})$ of $G$, see e.g., \cite{BTD}.
The real dimension of $G$ equals the complex dimension of the Lie group $G_{\mathbb C}$ as the Lie algebra of $G_{\mathbb C}$ is the complexification of the Lie algebra of $G$.
Now, the complex dimension of the Lie group $G_{\mathbb C}$ equals the Krull dimension of ${\mathcal O}(G_{\mathbb C})$, which is a finitely generated complex commutative algebra. By property a) of Theorem \ref{gkcommutative}, the latter coincides with the GK dimension.

We also give an alternative argument. By \cite[Theorem A]{Takahashi} the topological dimension of a general compact group $G$ is given by the transcendence degree of $\Q_G$ over ${\mathbb C}$. Assume for simplicity that $G$ is connected. Then $\Q_G$ has no zero divisors, hence the latter equals $\gkdim(\Q_G)$ by Theorem \ref{gkcommutative} b).
\end{proof}

\begin{defn} Let $G$ be a compact quantum group whose associated dense Hopf algebra $\Q_G$ is of polynomial growth. We define the \em{topological dimension} of $G$ by $$\dim(G)=\gkdim(\Q_G).$$   \end{defn}

It is an easy but important remark that computation of $\dim(G)$ can  be spelled out in terms of representations for all compact quantum groups, and in fact this connects it with the work of Banica and Vergnioux \cite{Vergnioux, BV}. We recall their main definitions. We
pick a dimension function
$$d: R(G)^+\to{\mathbb R}^+$$ on the representation ring of $G$ and define, for any representation $u$ of $G$ and any positive integer $n$, the sequence
$$b(u, n):=\sum d(v)^2.$$
where the sum is taken over irreducible subrepresentations $v\subset u^{\otimes k}$, for $k\leq n$.
One can then introduce a notion of polynomial (resp., subexponential, exponential) growth for $d$ requiring that for any $u\in R(G)^+$,
$b(u, n)=O(n^\gamma), \text{ for some } \gamma>0,$ (resp., ${\lim}_{n\to\infty} b(u, n)^{1/n}=1$, ${\lim}_{n\to\infty} b(u, n)^{1/n}<1$. Notice that these limits always exist.) We shall be interested in the growth of the following dimension functions:
\begin{itemize}
\item[{\rm a)}]
$d=\vdim$, associating every representation with its vector space dimension, and referred to as the vector dimension function,\medskip

\item[{\rm b)}]
$d=\qdim$, the quantum dimension.
\end{itemize}

\begin{lemma}\label{computegk}
Let $u$ be a  representation of a compact quantum group $G$ and $V$ be the linear span of the matrix coefficients of $u$. Then $V_n$
is the linear span of matrix coefficients of the set of inequivalent irreducible subrepresentations $v$ of $u^{\otimes k}$, for $k\leq n$, and we have equality
$$b(u, n)=\dim(V_n),$$
where the left hand side refers to the vector dimension function $\dim$.
\end{lemma}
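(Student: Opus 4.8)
The plan is to identify each power $V^k$ with the linear span of matrix coefficients of the tensor power $u^{\otimes k}$, and then to compute $\dim(V_n)$ by decomposing into irreducibles and invoking the linear independence of matrix coefficients recorded above.

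First I would use the tensor product formula $(u\otimes u')_{\xi\otimes\xi',\eta\otimes\eta'}=u_{\xi,\eta}u'_{\xi',\eta'}$: it shows that a product $u_{\xi_1,\eta_1}\cdots u_{\xi_k,\eta_k}$ of $k$ matrix coefficients of $u$ is exactly the matrix coefficient $(u^{\otimes k})_{\xi_1\otimes\cdots\otimes\xi_k,\,\eta_1\otimes\cdots\otimes\eta_k}$. Since simple tensors span $H^{\otimes k}$ and matrix coefficients depend bilinearly on the two vectors, $V^k$ equals the linear span of all matrix coefficients of $u^{\otimes k}$; for $k=0$ this is the one-dimensional span of the unit, the matrix coefficient of the trivial representation $\iota=u^{\otimes 0}$. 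Hence $V_n=\sum_{k=0}^{n}V^k$ is the linear span of the matrix coefficients of $u^{\otimes k}$ for $k\leq n$. Next I would decompose each $u^{\otimes k}$ as a direct sum of irreducibles: writing its matrix coefficients in an orthonormal basis adapted to such a decomposition shows that their span equals the sum of the spans of matrix coefficients of the irreducible summands, and since $v'=(T\otimes I)v(T^*\otimes I)$ with $T$ unitary gives $v'_{\xi,\eta}=v_{T^*\xi,T^*\eta}$, the span of the matrix coefficients of an irreducible representation depends only on its equivalence class. Taking the union over $k\leq n$ then yields the first assertion of the lemma.

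Finally, for the dimension count I would invoke the fact recalled in the preliminaries that the family $\{v_{rs}:v\in\hatG\}$ of matrix coefficients of the inequivalent irreducibles, taken with respect to fixed orthonormal bases, is linearly independent in $\Q_G$. An irreducible $v$ therefore contributes exactly $\dim(v)^2$ independent coefficients, and coefficients belonging to inequivalent irreducibles span complementary subspaces, so $\dim(V_n)=\sum_v\dim(v)^2$, the sum being over the inequivalent irreducible subrepresentations of $u^{\otimes k}$ for $k\leq n$, which is precisely the definition of $b(u,n)$ for the vector dimension function. I do not expect a serious obstacle: the argument is essentially bookkeeping, and the only points deserving care are the identification of $V^k$ with the coefficient space of $u^{\otimes k}$ (including the $k=0$ term, which is what puts the constants into $V_n$) and, more importantly, the appeal to linear independence of matrix coefficients across distinct irreducible classes, since this is exactly what makes the dimension additive over irreducible components and hence equal to $b(u,n)$.
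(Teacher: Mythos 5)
Your proposal is correct and follows essentially the same route as the paper: identify $V^k$ with the coefficient space of $u^{\otimes k}$ via the tensor product formula, reduce to irreducible summands by complete reducibility, and count dimensions using the linear independence of matrix coefficients of inequivalent irreducibles in $\Q_G$. Your write-up merely makes explicit some bookkeeping the paper leaves implicit (the $k=0$ term and the invariance of the coefficient span under unitary equivalence), which is fine.
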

\begin{proof}
$V_n$ is the linear span of products of matrix coefficients of $u$ up to length $n$. But finite products of entries of $u$ are coefficients of tensor powers of $u$, hence complete reducibility shows that $V_n$ is as stated. Computation of dimension follows from linear independence of coefficients of irreducible representations in $\Q_G$.
\end{proof}

\begin{ex}
Let $u_n$ denote the irreducible representation of $\SU(2)$ of dimension $n+1$. Then $u_1$ is a generating representation, and the family of irreducible subrepresentations of $u_1^{\otimes n}$ consists precisely of all $u_i$ such that $i\leq n$ has the same parity as $n$. Then vector and quantum dimension coincide and the associated sequence is 
$$b(u_1, n) = \frac{(n+1)(n+2)(2n+3)}{6},$$
hence $\dim(\SU(2)) = 3$.
\end{ex}

\begin{ex} More generally, the growth of $\dim$ for $G_q$ is the same as that for $G$, which is polynomial of degree equal to the manifold dimension of $G$, by \cite[Theorem 2.1]{BV}. However, the growth of   $\qdim$ is exponential. For $A_o(F)$ $\dim$ (hence, $\qdim$) has exponential growth as soon as $F$ is a matrix of order at least $3$.
\end{ex}

\begin{prop}\label{caratterization} Let $G$ be a compact quantum group. The following properties are equivalent:
\begin{itemize}
\item[{\rm a)}] 
$\Q_G$ has polynomial growth,
\item[{\rm b)}] 
$\text{\rm dim}$ has polynomial growth,
\item[{\rm c)}] 
$C(G)$ is the inductive limit of Hopf $C^*$--algebras of compact matrix quantum groups of finite 
topological dimension. 
\end{itemize}
\end{prop}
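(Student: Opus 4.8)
The plan is to treat $(a)\Leftrightarrow(b)$ and $(a)\Leftrightarrow(c)$ separately: the first is essentially a repackaging of Lemma \ref{computegk}, and the second is an inductive-limit argument obtained by completing finitely generated Hopf $^*$-subalgebras of $\Q_G$ inside $C(G)$.

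For $(a)\Leftrightarrow(b)$ I would argue as follows. By Lemma \ref{computegk}, if $u$ is a representation and $V$ is the span of its matrix coefficients, then $b(u,n)=\dim(V_n)$ for the vector dimension function, so polynomial growth of $\dim$ says precisely that $\dim(V_n)=O(n^{\gamma})$ for every such $V$. Since $\Q_G$ is the direct sum of the finite-dimensional spaces $C(v)$ of matrix coefficients of the irreducibles $v\in\hatG$, any finite-dimensional subspace of $\Q_G$ is contained in the span of the matrix coefficients of some $u=v_1\oplus\cdots\oplus v_m$; hence polynomial growth of $\dim$ yields polynomial growth of $\Q_G$, and the converse is immediate on taking $V$ to be the span of the matrix coefficients of a single $u$.

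For $(a)\Rightarrow(c)$: given a representation $u$, I would let $\Q_u\subseteq\Q_G$ be the $^*$-subalgebra generated by the matrix coefficients of $w:=u\oplus\overline{u}$. Using $\overline{u}_{\phi,\psi}=(u_{j^{-1}\phi,\, j^{*}\psi})^{*}$ one checks that $\Q_u$ is a finitely generated Hopf $^*$-subalgebra of $\Q_G$ with $w$ a (self-conjugate) generating representation, so the norm closure $Q_u$ of $\Q_u$ in $C(G)$, with the restricted coproduct, is a compact matrix quantum group $G_u$ whose canonical dense Hopf algebra is $\Q_u$. As $u$ varies these form a directed family with $\Q_G=\bigcup_u\Q_u$, hence $C(G)=\varinjlim_u Q_u$ as Hopf $C^{*}$-algebras. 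Finally $\Q_u$, being finitely generated and, as a subalgebra of $\Q_G$, of polynomial growth, has finite GK dimension, so $\dim(G_u)=\gkdim(\Q_u)<\infty$. For $(c)\Rightarrow(a)$: writing $C(G)=\varinjlim_i C(G_i)$, each structural morphism $\pi_i\colon C(G_i)\to C(G)$ sends matrix coefficients of representations to matrix coefficients, hence carries $\Q_{G_i}$ into $\Q_G$, and $B:=\bigcup_i\pi_i(\Q_{G_i})$ is a dense Hopf $^*$-subalgebra of $C(G)$. The crux is that $B=\Q_G$: being cosemisimple, $B=\bigoplus_{v\in S}C(v)$ for some $S\subseteq\hatG$, and if some $v_0\notin S$ then the norm-continuous isotypic projection $E_{v_0}$ vanishes on $B$, hence on $C(G)=\overline{B}$, contradicting $E_{v_0}|_{C(v_0)}=\mathrm{id}$. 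Granting this, $\Q_G$ is the inductive limit of the algebras $\pi_i(\Q_{G_i})$, each of which has polynomial growth since $G_i$ has finite topological dimension; and polynomial growth passes to inductive limits, because any finite-dimensional subspace of $\varinjlim$ already lies in one of the terms. Hence $\Q_G$ has polynomial growth.

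I expect the one genuinely delicate step to be the identification $B=\Q_G$ in $(c)\Rightarrow(a)$, i.e. the uniqueness of the dense Hopf $^*$-subalgebra of a compact quantum group; it is the only point where structure theory is really needed — cosemisimplicity of the underlying Hopf $^*$-algebra together with norm-continuity of the block projections of $C(G)$. Everything else is bookkeeping with Lemma \ref{computegk} and with the behaviour of polynomial growth under subalgebras and inductive limits.
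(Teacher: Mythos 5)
Your argument is correct and is essentially the route the paper intends (the proposition is stated there without an explicit proof, as an immediate consequence of Lemma \ref{computegk} and the generalities of Subsect.~\ref{PolyGK}): (a)$\Leftrightarrow$(b) is Lemma \ref{computegk} together with the observation that any finite-dimensional subspace of $\Q_G$ lies in the coefficient space of some representation, and (a)$\Leftrightarrow$(c) realises $\Q_G$ as the directed union of the finitely generated Hopf $^*$-subalgebras generated by the coefficients of $u\oplus\overline{u}$, completed inside $C(G)$. Your additional step identifying $B=\Q_G$ in (c)$\Rightarrow$(a), via cosemisimplicity and the norm-continuous isotypic projections coming from the orthogonality relations \eqref{orth1}--\eqref{orth2}, is sound and supplies precisely the detail the paper leaves implicit.
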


Theorem \ref{corollary}, along with the above proposition 
allow us to generalise \cite[Theorem 2.1]{BV} from connected, simply connected, compact Lie groups to all compact Lie groups. 

\begin{cor}\label{bv}
Let $G$ be a compact Lie group, $u$ a selfadjoint generating representation and let $N$ be the dimension of $G$ as a real manifold. Then the sequence $b(u, n)$ has strict polynomial growth, in that it is bounded above and below by a polynomial of degree $N$.
\end{cor}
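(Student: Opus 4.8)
The plan is to deduce Corollary \ref{bv} by combining Theorem \ref{corollary}(b), Lemma \ref{computegk}, and the strict polynomial growth half of Theorem \ref{gkcommutative}(a). First I would observe that since $u$ is a selfadjoint generating representation, its matrix coefficients span a finite-dimensional subspace $V$ that generates $\Q_G$ as an algebra: selfadjointness guarantees $V$ is closed under $^*$, so the unital algebra it generates is $^*$-closed and dense, and being a finitely generated subalgebra containing a generating set of representative functions it must be all of $\Q_G$ (equivalently, every irreducible appears in some $u^{\otimes k}$ because $u$ is generating and selfadjoint). By Lemma \ref{computegk} we then have $b(u,n)=\dim(V_n)$ for this particular generating subspace $V$.

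Next I would invoke Theorem \ref{corollary}(b): for a compact Lie group $G$ of real manifold dimension $N$, $\gkdim(\Q_G)=N$. Moreover $\Q_G$ is a finitely generated commutative algebra (it is the coordinate ring $\O(G_\Cset)$ of the complexification, as recalled in the proof of Theorem \ref{corollary}), so Theorem \ref{gkcommutative}(a) applies and tells us that $\Q_G$ has \emph{strict} polynomial growth of degree $N = \gkdim(\Q_G)$. By the Proposition preceding Theorem \ref{gkcommutative} (independence of strict polynomial growth from the generating subspace), strict polynomial growth of degree $N$ holds in particular for the generating subspace $V$ spanned by the matrix coefficients of $u$. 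That is, there are constants $c,d>0$ with $cn^N\le \dim(V_n)\le dn^N$ eventually, which by Lemma \ref{computegk} reads $cn^N\le b(u,n)\le dn^N$. This is exactly the assertion.

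The only genuine point requiring care — and the step I would expect to be the main obstacle — is the verification that $V$, the span of the matrix coefficients of a \emph{selfadjoint generating} representation, is indeed a generating subspace of $\Q_G$ in the algebraic sense of Section \ref{PolyGK}: one must check that the unital subalgebra generated by $V$ is all of $\Q_G$, not merely dense in $C(G)$. For this I would argue that "generating representation" for a compact (Lie) group means every irreducible representation is contained in some $u^{\otimes p}\otimes\bar u^{\otimes q}$; selfadjointness $u\cong\bar u$ collapses this to $u^{\otimes k}$; and then Lemma \ref{computegk} shows the coefficients of all irreducibles lie in $\bigcup_n V_n$, so $V$ generates $\Q_G$. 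One should also note the harmless finiteness issue that the bound $cn^N\le\dim(V_n)$ only holds eventually, but the Corollary is stated asymptotically ("bounded above and below by a polynomial of degree $N$"), so constants can be adjusted to absorb finitely many initial terms. With these remarks in place the corollary is immediate, so I would keep the write-up to a short paragraph invoking Theorem \ref{corollary}(b), Theorem \ref{gkcommutative}(a), the Proposition on independence of the generating subspace, and Lemma \ref{computegk}.
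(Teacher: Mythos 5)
Your proposal is correct and follows exactly the route the paper intends: the paper states the corollary as an immediate consequence of Theorem \ref{corollary} together with the strict polynomial growth clause of Theorem \ref{gkcommutative}(a), the proposition on independence of the generating subspace, and Lemma \ref{computegk}, which is precisely the chain you assemble (including the observation that selfadjointness of the generating representation makes the span of its coefficients an algebra-generating subspace of $\Q_G$). No gaps; your extra care about the "eventually" qualifier and about $V$ generating $\Q_G$ algebraically rather than just densely is exactly the right bookkeeping.
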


\section{Basic properties}\label{basic}

\begin{prop} 
If $G$ is a compact quantum group such that $\Q_G$ is of polynomial 
or subexponential growth and $H$ is either a quotient or a subgroup of $G$, then $\Q_H$ is accordingly of polynomial or subexponential growth. In the first case, $\dim H \leq \dim G$.
\end{prop}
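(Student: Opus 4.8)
## Proof plan for the final Proposition

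The plan is to handle the two cases — $H$ a quotient of $G$, and $H$ a subgroup of $G$ — separately, but in both cases to reduce to a statement about how the dimension function $b(\cdot,n)$, equivalently $\dim(V_n)$ for $V$ spanned by matrix coefficients, transforms under the relevant operation on representation categories. The key tool is Lemma \ref{computegk} together with Proposition \ref{caratterization}, so that it suffices to track the growth of the vector dimension function $\dim$ on the representation ring, rather than working directly with the abstract GK dimension of $\Q_H$.

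First I would treat the quotient case. If $H$ is a quotient of $G$, there is a surjective Hopf $^*$--algebra morphism $\pi\colon \Q_G\to\Q_H$, dually an inclusion of the representation category of $H$ into that of $G$ as a full tensor subcategory closed under subobjects and conjugates. Concretely, every representation $w$ of $H$ pulls back to a representation of $G$ of the same dimension, and $\pi$ maps the span of coefficients of $w$ onto the span of coefficients of the corresponding representation of $G$. Hence, fixing a generating representation $w$ of $H$ (which exists once $G$ — and therefore $H$ — is compact matrix, and in the general case one passes to the inductive limit), one gets $b_H(w,n)\le b_G(\widetilde{w},n)$ where $\widetilde w$ is the inflation of $w$; indeed the irreducible subrepresentations of $w^{\otimes k}$ inflate to (a subset of the) irreducible subrepresentations of $\widetilde w^{\otimes k}$, each with the same vector dimension. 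This immediately gives polynomial (resp. subexponential) growth of $b_H$ from that of $b_G$, hence $\Q_H$ has the same type of growth, and in the polynomial case the growth exponent of $b_H$ is bounded by that of $b_G$, i.e. $\dim H\le\dim G$. Passing to inductive limits handles the non-matrix case since both growth and $\gkdim$ commute with inductive limits.

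For the subgroup case, $H\subset G$ corresponds to a surjective Hopf $^*$--algebra morphism $r\colon \Q_G\to\Q_H$ (restriction of representative functions), so $\Q_H$ is a quotient \emph{algebra} of $\Q_G$. But GK dimension is monotone under algebra quotients: if $\A\twoheadrightarrow\B$ and $W$ is a finite-dimensional generating subspace of $\B$, lift it to a finite-dimensional subspace $V\subset\A$ with the same dimension; then $\dim(W_n)\le\dim(V_n)$ for all $n$, so $\B$ has polynomial (resp. subexponential) growth whenever $\A$ does, and $\gkdim(\B)\le\gkdim(\A)$. This disposes of the subgroup case directly, without needing representation-theoretic bookkeeping, and in particular reconfirms $\dim H\le\dim G$ there too (though the statement only asserts the inequality in the quotient case, the same bound holds).

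The only genuinely delicate point is the reduction to the finitely generated / compact matrix situation: for a general compact quantum group $\Q_G$ need not be finitely generated, and one must write it as an inductive limit of finitely generated Hopf $^*$--subalgebras (generated by the coefficients of a single representation together with its conjugate), check that quotients and subgroups interact well with these limits, and invoke that $\gkdim$ and the polynomial/subexponential growth conditions pass to inductive limits — all of which is recorded in Section \ref{prel} and Proposition \ref{caratterization}. Everything else is a routine comparison of the combinatorial sequences $b(u,n)$, so I expect the write-up to be short once the inductive-limit formalism is in place.
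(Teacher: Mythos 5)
Your argument is sound in substance, but it is worth comparing it with the paper's proof, which is considerably shorter: the paper observes that a quotient quantum group $H$ corresponds to a Hopf $^*$--subalgebra $\Q_H\subset\Q_G$ and a quantum subgroup to a quotient algebra of $\Q_G$, and then invokes the purely algebraic facts that subalgebras and quotient algebras of an algebra of polynomial (resp.\ subexponential) growth have the same property, with $\gkdim$ monotone under passing to subalgebras or quotients. Your subgroup case is exactly this argument; for the quotient case you instead run the explicit comparison of the sequences $b(u,n)$ via inflation of representations, which the paper mentions only as an alternative route. The comparison $b_H(w,n)\le b_G(\widetilde w,n)$ is correct (inflation is a fully faithful tensor functor preserving vector dimensions), but it can be bypassed: in the quotient case $\Q_H$ is realised inside $\Q_G$ as the span of coefficients of the inflated representations, so the subalgebra argument applies directly and also gives $\dim H\le\dim G$ at once.

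Two points in your write-up need correcting. First, the duality is stated backwards in the quotient case: for $H$ a quotient of $G$ the natural Hopf $^*$--morphism is an injection $\Q_H\hookrightarrow\Q_G$, dually an inclusion of $\Rep(H)$ into $\Rep(G)$ as a full tensor subcategory --- which is the picture your inflation argument actually uses; the surjection $\Q_G\twoheadrightarrow\Q_H$ is what describes a subgroup. Second, the parenthetical claim that $H$ is compact matrix whenever $G$ is, is false: the remark in the paper immediately following this proposition notes that subquotients of compact matrix quantum groups may fail to have a generating representation. Neither slip is fatal, because polynomial and subexponential growth of $\Q_H$ are conditions to be verified on every finite-dimensional subspace, equivalently on every single representation $w$ of $H$; the bound $b_H(w,n)\le b_G(\widetilde w,n)$ for each $w$, with exponent allowed to depend on $w$, already yields the statement, and $\dim H\le\dim G$ follows by taking suprema of growth exponents, so no generating representation or inductive-limit bookkeeping is actually required.
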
 
\begin{proof} A subalgebra or a quotient algebra $\B$ of an algebra $\A$ of polynomial or subexponential growth has the same property, and it is easy to see that $\gkdim(\B)\leq \gkdim(\A)$ in the first case. On the other hand, quotients and subgroups of $G$ are described respectively by subalgebras and quotient algebras of $\Q_G$.\end{proof}

An explicit proof in terms of growth of the sequences $b(u, n)$ can alternatively be worked out. One can indeed bound the sequences corresponding to quotients or subgroups by those related to $G$.

\begin{rem}
Notice that even though subquotients of compact quantum groups having a generating representation may fail to have a generating representation, finite topological dimension is inherited by all subquotients.
\end{rem}

We next give a simple result which guarantees polynomial growth.

\begin{prop}\label{commutative}
Let $G$ be a compact quantum group with commutative representation ring $R(G)$. Assume that for any irreducible representation $v$,
$\dim(V_n)=O(n^\gamma)$ for some $\gamma>0$, where $V$ is the linear span of coefficients of $v$. Then 
$\Q_G$ is of polynomial growth. 
\end{prop}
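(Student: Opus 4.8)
The plan is to reduce to a single well-chosen finite-dimensional subspace. Every finite-dimensional subspace of $\Q_G$ is contained in the linear span $E$ of the matrix coefficients of finitely many irreducible representations $v_1,\dots,v_m$, and a bound $\dim(E_n)=O(n^\delta)$ passes automatically to every subspace of $E$; so it suffices to prove such a bound for $E$. Writing $V^{(j)}$ for the span of the coefficients of $v_j$ and $u=v_1\oplus\dots\oplus v_m$, one has $E=V^{(1)}+\dots+V^{(m)}$, which is exactly the span of the coefficients of $u$, so by Lemma \ref{computegk} $\dim(E_n)=b(u,n)$. The hypothesis, again via Lemma \ref{computegk}, provides exponents $\gamma_j>0$ with $\dim\big(V^{(j)}_k\big)=b(v_j,k)=O(k^{\gamma_j})$.

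The key step is to use commutativity of $R(G)$ to sort tensor factors. Expanding $u^{\otimes k}=\bigoplus v_{i_1}\otimes\dots\otimes v_{i_k}$, commutativity of $R(G)$ gives for each summand an equivalence $v_{i_1}\otimes\dots\otimes v_{i_k}\simeq v_1^{\otimes a_1}\otimes\dots\otimes v_m^{\otimes a_m}$, obtained by permuting factors, where $a_j$ counts the indices $l$ with $i_l=j$ and $a_1+\dots+a_m=k$. Since equivalent representations have matrix coefficients spanning the same subspace of $\Q_G$ (the intertwining unitary is a change of orthonormal bases), every irreducible subrepresentation of $u^{\otimes k}$ occurs in some $v_1^{\otimes a_1}\otimes\dots\otimes v_m^{\otimes a_m}$ with $a_1+\dots+a_m=k$. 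Hence, by Lemma \ref{computegk} once more,
$$E_n\subseteq\sum_{a_1+\dots+a_m\le n}F_{\underline a},\qquad F_{\underline a}:=\text{span of the coefficients of }v_1^{\otimes a_1}\otimes\dots\otimes v_m^{\otimes a_m}.$$

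It then remains to estimate. A coefficient of $v_1^{\otimes a_1}\otimes\dots\otimes v_m^{\otimes a_m}$ is a product of $a_j$ coefficients of $v_j$, $j=1,\dots,m$, so $F_{\underline a}\subseteq\big(V^{(1)}\big)^{a_1}\cdots\big(V^{(m)}\big)^{a_m}$ and hence $\dim(F_{\underline a})\le\prod_{j=1}^m\dim\big(V^{(j)}_{a_j}\big)=\prod_{j=1}^m b(v_j,a_j)=O\big(n^{\gamma_1+\dots+\gamma_m}\big)$, uniformly in the tuples $\underline a$ with $a_1+\dots+a_m\le n$ (a factor with $a_j=0$ contributes $\dim(V^{(j)}_0)=1$). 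Since the number of such tuples is $\binom{n+m}{m}=O(n^m)$, summation yields $\dim(E_n)=b(u,n)=O\big(n^{m+\gamma_1+\dots+\gamma_m}\big)$, which is the required polynomial growth of $\Q_G$.

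The substantive point is the sorting step: without grouping the tensor factors by isomorphism type one only gets the exponential bound $\dim(E_n)\le(\dim u)^{2n}$, and it is precisely commutativity of $R(G)$ that allows one to trade it for a product of the per-irreducible polynomial estimates. The auxiliary facts used — that equivalent representations span the same coefficient subspace, the behaviour of matrix coefficients under direct sums and tensor products, and the handling of vanishing exponents $a_j$ — are routine; and the exponent $m+\sum_j\gamma_j$ produced this way is far from optimal, which is harmless since only polynomial growth, not its exact degree, is asserted.
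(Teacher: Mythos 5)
Your proposal is correct and follows essentially the same route as the paper: decompose the generating representation into irreducibles, use commutativity of $R(G)$ to sort tensor factors so that $E_n$ is spanned by the products $\big(V^{(1)}\big)^{a_1}\cdots\big(V^{(m)}\big)^{a_m}$ with $a_1+\dots+a_m\le n$, and bound $\dim(E_n)$ by $\binom{n+m}{m}\prod_j\dim\big(V^{(j)}_n\big)=O\big(n^{m+\gamma_1+\dots+\gamma_m}\big)$. You merely make explicit some steps the paper leaves implicit (the reduction to coefficient subspaces of a single representation, the appeal to Lemma \ref{computegk}, and the fact that equivalent representations span the same coefficient space).
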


\begin{proof}
Pick a subspace $W\subset\Q_G$ spanned by coefficients of a representation $u$ of $G$,  and decompose $u$ into its irreducible components $u=v_1\oplus\dots\oplus v_p$. Denote by $V_{i}$ the span of coefficients of $v_i$. The subspace $W_n$, being the linear span of coefficients of powers of $u$,    is already spanned  by $V_{1}^{k_1}\dots V_{p}^{k_p}$ with $k_1+\dots+ k_p\leq n$ thanks to commutativity of $R(G)$.
Hence $\dim(W_n)\leq {n+p \choose p}\dim((V_{1})_n)\dots \dim((V_{p})_n)$ showing that $\dim(W_n)$ is bounded by a polynomial.\end{proof}

We have seen that the property of polynomial growth for $\Q_G$ is in fact a property of the vector dimension function $\dim$. On the other hand, $R(G)^+$ is also endowed with the quantum dimension function $u\mapsto\qdim(u)$, which in general exceeds $\dim$. The stronger property of polynomial growth of $\qdim$ was introduced, among other things, by Vergnioux \cite{Vergnioux}.  
Later on we will be interested in quantum groups $G$ with this property.
 
\begin{prop}\label{unique} Let $d$ be a dimension function on $R(G)$ of subexponential growth. Then any other dimension function $d'$ on $R(G)$ satisfies $d'(u)\geq d(u)$ for all $u$. In particular, $R(G)$ admits at most one dimension function of subexponential growth. 
\end{prop}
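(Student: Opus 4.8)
The plan is to reduce the inequality $d'(u)\ge d(u)$ to the case of a self-conjugate representation containing the trivial one, and then to deduce it from a Cauchy--Schwarz estimate linking the sequence $b(\cdot,n)$ attached to $d$ with the values of $d'$. For the reduction, given an arbitrary $u$ I set $v=u\oplus\overline u\oplus\iota$, so that $\overline v\cong v$ and $\iota\subset v$. Since a dimension function is additive, multiplicative, conjugation invariant and sends the trivial representation to $1$, we get $d(v)=2d(u)+1$ and $d'(v)=2d'(u)+1$; thus $d(u)\le d'(u)$ is equivalent to $d(v)\le d'(v)$, and it is enough to treat representations $v$ with $\overline v\cong v$ and $\iota\subset v$.

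For such a $v$, the containment $\iota\subset v$ gives $v^{\otimes k}\subset v^{\otimes n}$ for $k\le n$, so the inequivalent irreducibles occurring in some $v^{\otimes k}$ with $k\le n$ are precisely those occurring in $v^{\otimes n}$; I denote this finite set by $S_n$ and write $m_r=\mult(r,v^{\otimes n})$ for $r\in S_n$. The three ingredients are then: (i) $b(v,n)=\sum_{r\in S_n}d(r)^2$ by definition; (ii) $d(v)^n=d(v^{\otimes n})=\sum_{r\in S_n}m_r\,d(r)$ by additivity and multiplicativity of $d$; (iii) since $\overline{v^{\otimes n}}\cong v^{\otimes n}$, Frobenius reciprocity identifies the multiplicity of $\iota$ in $v^{\otimes 2n}\cong v^{\otimes n}\otimes\overline{v^{\otimes n}}$ with $\sum_{r\in S_n}m_r^2$, so that $d'(v)^{2n}=d'(v^{\otimes 2n})\ge\mult(\iota,v^{\otimes 2n})=\sum_{r\in S_n}m_r^2$ using positivity of $d'$ and $d'(\iota)=1$. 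Cauchy--Schwarz then gives
$$d(v)^n=\sum_{r\in S_n}m_r\,d(r)\le\left(\sum_{r\in S_n}m_r^2\right)^{1/2}\left(\sum_{r\in S_n}d(r)^2\right)^{1/2}\le d'(v)^n\,b(v,n)^{1/2},$$
hence $d(v)/d'(v)\le b(v,n)^{1/(2n)}$. Letting $n\to\infty$ and using that $b(v,n)^{1/n}\to 1$ (subexponential growth of $d$) yields $d(v)\le d'(v)$, and therefore $d(u)\le d'(u)$ for all $u$.

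The uniqueness statement follows at once: if $d_1$ and $d_2$ both have subexponential growth, applying what has just been proved first with $(d,d')=(d_1,d_2)$ and then with $(d,d')=(d_2,d_1)$ gives $d_1\le d_2\le d_1$. I expect no serious obstacle here; the only points deserving attention are the bookkeeping behind (i) and (iii) — which is exactly the reason for first passing to a self-conjugate $v$ with $\iota\subset v$, since then $b(v,n)$ is literally $\sum_{r\in S_n}d(r)^2$ and the relevant tensor powers interlock — together with the routine observation that a dimension function is conjugation invariant and normalised, which legitimises the reduction step.
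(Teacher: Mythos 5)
Your argument is correct, but it takes a genuinely different route from the paper's. The paper argues by contradiction directly on an arbitrary $u$: every irreducible constituent $r$ of $u^{\otimes k}$, $k\leq n$, satisfies $d(r)\leq\sqrt{b(u,n)}$, so the number of irreducible summands (with multiplicity) of $\bigoplus_{k\leq n}u^{\otimes k}$ is at least $(1+d+\dots+d^n)/\sqrt{b(u,n)}$ with $d=d(u)$, while each summand contributes at least $1$ to the $d'$-dimension; if $d'(u)<d(u)$ this forces $b(u,n)\geq (n+1)^{-2}\bigl(d/d'\bigr)^{2n}$, contradicting subexponentiality. You instead first reduce to the self-conjugate representation $v=u\oplus\overline{u}\oplus\iota$ containing the trivial one (legitimate, since conjugation invariance, positivity and $d(\iota)=1$ are part of the standard axioms for a dimension function, and subexponentiality of $d$ is required for every object, hence for $v$), and then combine Cauchy--Schwarz with Frobenius reciprocity, $\sum_r m_r^2=\mult(\iota,v^{\otimes 2n})\leq d'(v)^{2n}$, to obtain $d(v)/d'(v)\leq b(v,n)^{1/(2n)}\to 1$. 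Both proofs hinge on the same input, namely subexponentiality of $b(\cdot,n)$ for $d$, but the paper's summand-counting is slightly more economical (no reduction step, no conjugates beyond the fact that $d'(r)\geq 1$ for irreducible $r$) and is phrased as a contradiction, whereas your chain of inequalities is direct and in effect reproves the multiplicity estimate $\mult(\iota,v^{\otimes 2n})\geq d(v)^{2n}/b(v,n)$ of Banica--Vergnioux \cite{BV}, the very estimate the paper invokes later in its coamenability theorem; so your route makes the kinship between Proposition \ref{unique} and coamenability-type arguments explicit, at the cost of the extra bookkeeping you rightly identified (the reduction to $\overline{v}\cong v$ with $\iota\subset v$ so that the sets of constituents of the powers interlock).
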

\begin{proof}
Assume that for some $u$, $d' = d'(u) < d(u) = d$. If $v \subset u^{\otimes k}$, $k\leq n$, is an irreducible subrepresentation then $d(v) \leq \sqrt{b(u, n)}$, where obviously $b(u,n)$ is associated to $d$. We know that $d(u^k) = d^k$ for $k\leq n$.
Hence there are at least $(1+d+\dots+d^n)/\sqrt{b(u, n)}$ irreducible summands (counted with multiplicity) in an irreducible decomposition of $\oplus_{k=0}^n u^{\otimes k}$.
Let us compare the $d'$-dimension using the same decomposition. We have that $d'(u^k)$ equals $d'^k$ and that each irreducible summand has $d'$-dimension at least $1$. Consequently,
$$(1+\dots+d'^n) \geq (1+\dots+ d^n)/\sqrt{b(u, n)},$$
implying $b(u, n) \geq \frac{1}{(n+1)^2} (d/d')^{2n}$, which   contradicts subexponentiality of $b(u, n)$.
\end{proof}

Notice   that $d(u)$ may differ from the vector dimension function.
For example, for any integer $n\geq 2$ if $F\in M_n({\mathbb C})$ satisfies suitable properties then $\SU_q(2)$ and $A_o(F)$ have isomorphic representation rings (in fact isomorphic representation categories), hence the vector dimension function of $R(\SU_q(2))^+$ gives a dimension function $d$ on $R(A_o(F))^+$ of polynomial growth smaller than its vector dimension function.

\begin{cor}\label{vergnioux}
The following properties are equivalent for a compact quantum group $G$.
\begin{itemize}
\item[{\rm a)}] $u\mapsto \text{\rm dim}(u)$ has polynomial (subexponential) growth and $G$ is of Kac type,
\item[{\rm b)}] $u\mapsto\text{\rm dim}_q(u)$ has polynomial (subexponential) growth.

\end{itemize}
\end{cor}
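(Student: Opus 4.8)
The plan is to deduce the corollary from Proposition~\ref{unique} together with the elementary inequality $\qdim(u)\geq\dim(u)$, valid for every representation $u$ and recalled above. Note that both $\vdim$ and $\qdim$ are dimension functions on $R(G)^+$, so Proposition~\ref{unique} is available for either of them. I would first dispose of the implication (a)$\Rightarrow$(b): if $G$ is of Kac type then $F_u=I$ for every $u$, hence $\qdim(u)=\dim(u)$ for all $u\in R(G)^+$, so $\qdim$ and $\vdim$ coincide as functions on $R(G)^+$ and in particular have the same growth; thus polynomial (resp.\ subexponential) growth of $\vdim$ gives the same for $\qdim$.

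For (b)$\Rightarrow$(a), assume $\qdim$ has polynomial, or merely subexponential, growth; since polynomial growth implies subexponential growth, Proposition~\ref{unique} applies with $d=\qdim$ and $d'=\vdim$ and yields $\dim(u)\geq\qdim(u)$ for all $u$. Combined with the reverse inequality $\qdim(u)\geq\dim(u)$ this gives $\dim(u)=\qdim(u)$ for every $u\in R(G)^+$, so $\vdim$ inherits the polynomial (resp.\ subexponential) growth of $\qdim$, and it only remains to check that $G$ is of Kac type. For this I would fix an irreducible $u$ and let $\lambda_1,\dots,\lambda_m$ be the eigenvalues of $F_u$, where $m=\dim(u)$. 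The normalisation $\Tr(F_u)=\Tr(F_u^{-1})$ together with $\Tr(F_u)=\qdim(u)=\dim(u)=m$ forces $\sum_i\lambda_i=\sum_i\lambda_i^{-1}=m$, and the arithmetic--harmonic mean inequality then gives $\lambda_1=\dots=\lambda_m=1$, i.e.\ $F_u=I$. As $u$ was an arbitrary irreducible, $G$ is of Kac type.

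I do not anticipate any serious obstacle: the argument is short and rests entirely on Proposition~\ref{unique}. The only point requiring a little care is the final step, where the passage from $\qdim(u)=\dim(u)$ to $F_u=I$ genuinely uses the normalisation $\Tr(F_u)=\Tr(F_u^{-1})$ --- without it, $\Tr(F_u)=\dim(u)$ alone would not determine $F_u$.
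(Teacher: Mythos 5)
Your argument is correct and takes essentially the same route as the paper: both rest on Proposition~\ref{unique} together with the inequality $\qdim(u)\geq\dim(u)$ to force $\dim=\qdim$, from which the growth statement and the Kac property follow. The only (harmless) difference is that you re-derive the implication $\qdim(u)=\dim(u)\Rightarrow F_u=I$ by the equality case of the arithmetic--harmonic mean inequality, whereas the paper simply invokes the characterization of Kac type recalled in the preliminaries.
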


\begin{proof} If b) holds then $\dim$ must have subexponential growth since it is bounded above by $\qdim$.
Hence $\qdim=\dim$ by the previous proposition, and this shows that $G$ is of Kac type. The converse is obvious.
\end{proof}

We remark that Prop. \ref{unique} and Cor. \ref{vergnioux} are not new, but the original arguments are scattered in the literature and sometimes not explicitly stated, our proofs being perhaps more direct. For example, Cor. \ref{vergnioux} follows from \cite[Prop. 4.4]{Vergnioux}, and \cite{Banica99, BV}.

Also, Prop. \ref{unique}, follows from the following facts. The vector dimension function of a coamenable compact quantum group is minimal among all dimension functions, see \cite{NTbook}. Furthermore, the following relationship between subexponential growth and coamenability has been highlighted in \cite{Banica99, BV}, where the main focus was on compact quantum groups of Kac type. For the reader's convenience, we complete details of the proof to point out that the Kac assumption is not needed.

\begin{thm}
Every compact   quantum group $G$ for which $\Q_G$ is of subexponential growth is coamenable.
\end{thm}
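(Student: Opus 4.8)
The plan is to exploit the coamenability criterion in terms of the counit: a compact quantum group $G$ is coamenable if and only if the counit $\varepsilon:\Q_G\to\Cset$ extends to a bounded $^*$-character on the reduced $C^*$-algebra $C_{\text{red}}(G)$, equivalently, if and only if for the fundamental-type generating data there is a sequence of ``almost invariant'' vectors. The cleanest route here is the classical characterization (going back to Banica, and to Skandalis' remark, used in \cite{Banica99, BV}): $G$ is coamenable precisely when, for every finite-dimensional representation $u$, the number $\dim(u)$ belongs to the spectrum of the operator of left multiplication by $\chi_u+\chi_u^*$ (or of $\chi_u$ when $u$ is selfadjoint) acting on $L^2(G,h)$. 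So the first step is to reduce to a generating selfadjoint representation $u=\overline u$ and set $x=\chi_u\in\Q_G\subset C_{\text{red}}(G)$, a selfadjoint element with $\|x\|\le\dim(u)$; one must show $\dim(u)\in\mathrm{sp}(x)$.

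Next I would produce test vectors from the $b(u,n)$ data. Let $P_n\in\Q_G$ be the sum of characters of all irreducibles appearing in $\bigoplus_{k\le n}u^{\otimes k}$, equivalently $\xi_n$ the orthogonal projection in $L^2(G,h)$ of $\1$ onto the span $V_n$ of Lemma \ref{computegk}; then by the orthogonality relations \eqref{orth1}--\eqref{orth2} the squared norm $\|\xi_n\|_2^2$ is controlled by the quantum dimensions of those irreducibles, but since we are after $\dim$ it is more transparent to take instead the vector $\eta_n=\sum_{v}\dim(v)\,[v]\in V_n$ where $[v]$ denotes a suitably normalized ``diagonal'' coefficient vector, arranged so that $\|\eta_n\|_2^2$ is comparable to $b(u,n)$. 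The key computation is that multiplication by $x$ shifts levels: $x\cdot V_k\subseteq V_{k+1}$ and, because $u\otimes u^{\otimes k}$ decomposes with the relevant multiplicities, the inner product $\langle x\eta_n,\eta_n\rangle$ differs from $\dim(u)\|\eta_n\|_2^2$ only by a boundary term supported on levels $k\in\{n-1,n,n+1\}$, hence of size $O(b(u,n)-b(u,n-2))$ up to a $\dim(u)$ factor. Subexponential growth of $b(u,n)$ forces $\frac{b(u,n)-b(u,n-2)}{b(u,n)}\to 0$ along a subsequence (otherwise $b(u,n)$ would grow at least geometrically), so along that subsequence $\langle x\eta_n,\eta_n\rangle/\|\eta_n\|_2^2\to\dim(u)$, which with $\|x\|\le\dim(u)$ yields $\dim(u)\in\mathrm{sp}(x)$. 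Running this for a generating selfadjoint $u$, and then bootstrapping to all representations via the proposition that subquotients inherit subexponential growth together with Proposition \ref{unique} (the vector dimension is then the minimal dimension function, hence matches the one detected by $C_{\text{red}}$), gives boundedness of $\varepsilon$ and thus coamenability.

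An alternative, slightly softer route avoids the spectral picture entirely: use that $\Q_G$ is the inductive limit of the finitely generated Hopf $^*$-subalgebras $\Q_{G_u}$ generated by the coefficients of a single selfadjoint $u$, note each such $\Q_{G_u}$ has subexponential growth as well (being a subalgebra, by the argument in Section \ref{basic}), and invoke that a compact matrix quantum group whose fusion semiring has subexponential growth with respect to $\dim$ is coamenable — this is exactly the content cited from \cite{Banica99, BV}, whose proof is the spectral argument just sketched. Since coamenability passes to inductive limits of Woronowicz subalgebras (a norm-one multiplicative state extends by continuity), $G$ is coamenable. I would present the first, self-contained argument as the main proof and mention the second in a remark.

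The main obstacle I anticipate is purely technical: normalizing the test vectors $\eta_n$ so that $\|\eta_n\|_2^2$ is genuinely comparable to $b(u,n)$ and the cross term $\langle x\eta_n,\eta_n\rangle$ telescopes cleanly — this is where the non-Kac case requires care, because the orthogonality relations \eqref{orth1}--\eqref{orth2} bring in $F_u$ and the quantum dimensions, so one must check that choosing $\eta_n$ from the ``$F$-adapted'' orthonormal bases (so that the weights are the ordinary dimensions $\dim v$ and not $\qdim v$) still makes the boundary estimate work. Once that bookkeeping is set up, the subexponential-growth hypothesis does the rest through the elementary fact that a subexponential sequence cannot have $\liminf b(u,n)/b(u,n-2)>1$.
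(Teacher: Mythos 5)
Your argument is correct in substance, and it takes a genuinely different route from the paper's. Both proofs start from the same Skandalis--Banica criterion (reduce to a self-conjugate $u$ and show $\|\chi_u\|_r=\dim(u)$), but from there the paper estimates $\|\chi_u\|_r=\|\chi_u^{2k}\|_r^{1/2k}\geq h(\chi_u^{2k}\chi_u^{2k})^{1/4k}\geq m_{2k}(1)^{1/2k}$, with $m_{2k}(1)$ the multiplicity of the trivial representation in $u^{\otimes 2k}$, and then cites the proof of \cite[Prop. 2.1]{BV} for the fact that subexponential growth forces $\limsup_k m_{2k}(1)^{1/2k}\geq\dim(u)$; you instead build F\o lner-type almost eigenvectors out of the balls $S_n$ of irreducibles contained in $u^{\otimes k}$, $k\leq n$, which makes the proof self-contained (no appeal to the combinatorial estimate of \cite{BV}) and makes the analogy with the classical F\o lner argument for groups of subexponential growth transparent, at the cost of some bookkeeping. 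That bookkeeping --- the ``main obstacle'' you flag --- disappears if you take $[v]=\chi_v$: characters of inequivalent irreducibles are orthonormal in $L^2(G,h)$ for \emph{any} compact quantum group, Kac or not, since $h(\chi_v^*\chi_w)=\dim(\iota,\bar v\otimes w)=\delta_{v,w}$, so $\eta_n=\sum_{v\in S_n}\dim(v)\chi_v$ has $\|\eta_n\|_2^2=b(u,n)$ exactly and no $F$-matrices enter. Writing $N_{uv}^w=\dim(w,u\otimes v)$, one has $\langle\chi_u\chi_v,\chi_w\rangle=N_{uv}^w$, and the identity $\sum_w N_{uv}^w\dim(w)=\dim(u)\dim(v)$ together with Frobenius reciprocity $N_{uv}^w=N_{\bar u w}^v$ bounds the boundary term by $\dim(u)\bigl(b(u,n+1)-b(u,n)\bigr)$ (your ``levels $n-1,n,n+1$'' accounting is slightly off, but harmlessly so); subexponentiality yields a subsequence with $b(u,n+1)/b(u,n)\to1$, hence $\langle\chi_u\eta_n,\eta_n\rangle/\|\eta_n\|_2^2\to\dim(u)$ and, since $\|\chi_u\|_r\leq\dim(u)$, indeed $\dim(u)\in\mathrm{sp}(\chi_u)$. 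Two small points: the invocation of Proposition \ref{unique} in your ``bootstrapping'' step is unnecessary, since the F\o lner argument applies verbatim to every self-conjugate representation (replace $u$ by $u\oplus\bar u$ if need be), which is all the criterion requires; and for a general $G$ without a generating representation your inductive-limit remark, or the paper's device of running the estimate for arbitrary self-conjugate $u$, closes the argument.
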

\begin{proof}
Let $u=\bar{u}$ be a self-conjugate representation of $G$. Thanks to a characterisation of coamenability given by Skandalis, cf. \cite[Theorem 6.1]{Banica99}, see also \cite[Theorem 2.7.10]{NTbook} and the remark following it, we have to show that $\|\chi_u\|_r={\rm dim}(u)$. The inequality $\|\chi_u\|_r\leq{\rm dim}(u)$ is always trivially verified. Therefore, it suffices to prove the reverse   inequality.
We start by observing that  $\|\chi_u\|_r=\|\chi_u^{2k}\|_r^{\frac{1}{2k}}$ for every natural number $k$, since $\chi_u$ is self-adjoint.
We have $\|\chi_u^{2k}\|_r^{\frac{1}{2k}}\geq h(\chi_u^{2k}\chi_u^{2k})^{\frac{1}{4k}}=h(\chi_{u^{\otimes^{2k}}}\chi_{u^{\otimes^{2k}}})^{\frac{1}{4k}}\geq m_{2k}(1)^{\frac{1}{2k}}$, 
where $m_{2k}(1)$ is the multiplicity of the trivial representation in $u^{\otimes^{2k}}$ and the last inequality follows from  a  decomposition of $u^{\otimes^{2k}}$ into irreducible components. 

The statement is now a consequence of the proof of \cite[Prop. 2.1]{BV}, which shows that subexponential growth is enough to establish $\limsup_{k\rightarrow\infty} m_{2k}(1)^{\frac{1}{2k}}\geq {\rm dim}(u)$.
\end{proof}

We conclude by observing that most of the results of this and the previous section can be extended to ergodic actions $\delta: C\to C\otimes Q$ of compact quantum groups on unital $C^*$-algebras.
In the commutative case, $C$ will be the algebra of continuous functions on a quotient space $G/K$ by a closed subgroup, and one can generalise Theorem \ref{corollary} to this setting. In the general case, ergodic actions still enjoy a good spectral theory \cite{PR, Podles}, which can be used to extend Lemma \ref{computegk}. Indeed,    $C$ has a canonical dense $^*$--subalgebra $\C$ linearly spanned by a choice of elements carrying irreducible representations under the action. For a given (reducible) representation $u$, $V$ is the subspace of $\C$ corresponding to the irreducible components of $u$. Thus $V_n$ becomes the linear span of elements of $\C$ corresponding to set $S_{u, n}$ of irreducible and spectral subrepresentations of $u^{\otimes k}$, for $k\leq n$. The computation of now yields $\dim(V_n)=\sum_{v\in S_{u,n}}\dim(v)\mult(v)$, where $\mult(v)$ is the multiplicity of $v$ in the action.
Unlike the classical case, examples are known of ergodic actions of $\SU_q(2)$ for which $\mult(v)$ arbitrarily exceeds $\dim(v)$ \cite{BDV}. Correspondingly, the dense subalgebra of an ergodic action of a finite dimensional quantum group can be of infinite dimension. However, if the action arises from a quantum subgroup $K$ then
$\mult(v)\leq \dim(v)$ for all irreducible representations $v$, hence one still has $\dim(G/K)\leq \dim(G)$.

\section{The Fourier algebra of a compact quantum group}\label{fourier}

The Fourier algebra $A(G)$ in a non-commutative framework has been studied by several authors, see \cite{BS}, for multiplicative unitaries and, more recently, \cite{Caspers, Daws, HuNeuRuan2009,  HuNeuRuan2010, Kahng} for locally compact quantum groups.
In the compact case, discreteness of the dual allows an explicit  description of the  Fourier algebra, and, with a different motivation, Simeng Wang has discussed certain aspects of interest in harmonic analysis \cite{Wang}.

In this section we describe a standard operator algebraic approach to $A(G)$. In particular, we discuss a result about a correspondence between the irreducible representations of $A(G)$ and those of $C_{\text{max}}(G)$. We shall also see that if $G$ is of Kac type then $A(G)$ is an involutive Banach algebra with respect to its natural involution.

\subsection{The Banach algebra $A(G)$}
Consider the dense subalgebra $\Q_G$ of $C(G)$, and express an element $a\in\Q_G$ in the form
$$a=\sum_{v\in\hatG}\sum_{i,j} \lambda^v_{i,j} v_{i,j},$$
where $v_{i,j}$ are coefficients of $v$ with respect to some orthonormal basis. We introduce a new norm in $\Q_G$,
\begin{equation}\label{norm1}
\|a\|_1=\sum_{v\in\hatG}\Tr(|\Lambda_v^t|)
\end{equation}
where $\Lambda_v$ denotes the complex-valued matrix $(\lambda^v_{i,j})$ and $\Tr$ is the non-normalised trace of a matrix algebra. Properties of $\Tr$ imply that $a\mapsto\|a\|_1$ is indeed a norm that depends neither on the choice of the irreducible representations nor on that of the orthonormal bases.    
\begin{thm}\label{Fourier}
The completion $A(G)$ of $\Q_G$ in the norm $a\mapsto\|a\|_1$  is a Banach algebra isometrically isomorphic via the Fourier transform to
$\ell^1(\hatG):=\ell^\infty(\hatG)_*\,\,$, 
where $\hatG$ is the dual quantum group of ${G}$. When $G$ is of Kac type, the natural involution of $\Q_G$ makes $A(G)$ into an involutive Banach algebra.
\end{thm}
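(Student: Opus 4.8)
\emph{Overview.} The plan is to build the isomorphism in two layers: first identify $A(G)$ with $\ell^1(\hatG)$ as a Banach \emph{space}, then transport the convolution product and, when $G$ is of Kac type, the involution, checking each is contractive. For the Banach-space part I would use linear independence of the matrix coefficients of inequivalent irreducibles to write $\Q_G=\bigoplus_{v\in\hatG}C_v$, with $C_v$ the span of the coefficients of $v$, so that $a\mapsto\Lambda_v$ is a linear isomorphism $C_v\to\B(H_v)^*$ and $\|\cdot\|_1$ restricted to $C_v$ is the trace-class norm $\Tr(|\Lambda_v^t|)=\Tr(|\Lambda_v|)$ of the predual $\B(H_v)_*$. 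Thus $(\Q_G,\|\cdot\|_1)$ is the algebraic $\ell^1$-direct sum of the $\B(H_v)_*$, whose completion is $\bigoplus_v^{\ell^1}\B(H_v)_*=\bigl(\prod_v\B(H_v)\bigr)_*=\ell^\infty(\hatG)_*=\ell^1(\hatG)$. The concrete isomorphism is the Fourier transform $\mathcal F(\omega)=\sum_{v\in\hatG}(\omega_v\otimes\mathrm{id})(v)$ for $\omega=(\omega_v)_v$, each $v$ being here regarded as a unitary in $\B(H_v)\otimes C(G)$: writing $\omega_v=\Tr(Z_v\,\cdot\,)$ with $Z_v\in\B(H_v)$, one computes $\mathcal F(\omega)=\sum_{v}\sum_{i,j}(Z_v)_{j,i}\,v_{i,j}$, so the coefficient matrix of $\mathcal F(\omega)$ in $C_v$ is $Z_v^t$ and $\|\mathcal F(\omega)\|_1=\sum_v\Tr(|Z_v|)=\|\omega\|_{\ell^1(\hatG)}$ --- the transpose in \eqref{norm1} being precisely the one that occurs here. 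Hence $\mathcal F$ restricts to a bijection from the finitely supported functions (dense in $\ell^1(\hatG)$) onto $\Q_G$ (dense in $A(G)$), isometric for the two norms, so it extends to an isometric bijection $\ell^1(\hatG)\to A(G)$.

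\emph{The product.} The space $\ell^1(\hatG)=\ell^\infty(\hatG)_*$ is a Banach algebra under the predual product dual to the coproduct $\widehat\Delta$ of $\hatG$ (since $\widehat\Delta$ is a normal unital $^*$-homomorphism), and by construction the multiplication of $\Q_G$ is dual, under the canonical pairing of $\Q_G$ with the Hopf algebra $\bigoplus_v\B(H_v)$ of finitely supported functions on $\hatG$, to $\widehat\Delta$; so $\mathcal F$ turns the predual product into the pointwise product of $\Q_G\subset C(G)$, and $A(G)$ becomes a Banach algebra isometrically isomorphic to $\ell^1(\hatG)$. I would also give the direct reason why the pointwise product is $\|\cdot\|_1$-contractive, since this is the heart of the matter. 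By additivity of $\|\cdot\|_1$ over $\hatG$ it suffices to bound $\|ab\|_1$ for $a\in C_u$, $b\in C_v$ with $u,v$ irreducible; writing $a,b$ as matrix coefficients attached to operators $A\in\B(H_u)$, $B\in\B(H_v)$, one has $ab=(u\otimes v)_{A\otimes B}$ because $(u\otimes v)_{\xi\otimes\xi',\eta\otimes\eta'}=u_{\xi,\eta}v_{\xi',\eta'}$. Choosing an orthonormal family of isometric intertwiners $S_{w,\alpha}\colon H_w\to H_u\otimes H_v$ with $\sum_{w,\alpha}S_{w,\alpha}S_{w,\alpha}^*=I$, so that $u\otimes v=\sum_{w,\alpha}(S_{w,\alpha}\otimes I)\,w\,(S_{w,\alpha}^*\otimes I)$, the $C_w$-component of $ab$ is (up to an immaterial transpose and entrywise conjugation) the coefficient of $w$ attached to $\sum_\alpha S_{w,\alpha}^*(A\otimes B)S_{w,\alpha}$. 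Therefore, writing $P_{w,\alpha}=S_{w,\alpha}S_{w,\alpha}^*$, one has
$$\|ab\|_1\le\sum_{w,\alpha}\bigl\|S_{w,\alpha}^*(A\otimes B)S_{w,\alpha}\bigr\|_{S^1}=\Bigl\|\sum_{w,\alpha}P_{w,\alpha}(A\otimes B)P_{w,\alpha}\Bigr\|_{S^1}\le\|A\otimes B\|_{S^1}=\|A\|_{S^1}\|B\|_{S^1}=\|a\|_1\|b\|_1,$$
where the first equality holds because distinct summands are supported on mutually orthogonal subspaces, the middle inequality because the pinching $X\mapsto\sum_{w,\alpha}P_{w,\alpha}XP_{w,\alpha}$ is unital, positive and trace-preserving, hence a $\|\cdot\|_{S^1}$-contraction, and the last equality is multiplicativity of the trace norm under $\otimes$; the triangle inequality then gives $\|ab\|_1\le\|a\|_1\|b\|_1$ on all of $\Q_G$. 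The main obstacle is exactly this sharp estimate with constant one: bounding each $\|S_{w,\alpha}^*(A\otimes B)S_{w,\alpha}\|_{S^1}$ separately by $\|A\|_{S^1}\|B\|_{S^1}$ and summing would cost a factor equal to the number of irreducible constituents of $u\otimes v$ counted with multiplicity, which is worthless; one must instead view the whole Clebsch--Gordan decomposition as the single unitary $(S_{w,\alpha})_{w,\alpha}$ and exploit that diagonal-block compression is trace-norm contractive.

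\emph{The involution in the Kac case.} Finally I would handle the involution: when $G$ is of Kac type, $F_u=I$ for all $u$, so the conjugation $j=j_u$ is antiunitary and $(u_{\xi,\eta})^*=\overline{u}_{j\xi,\,j\eta}$; hence for $a=\sum_{i,j}\lambda_{i,j}u_{e_i,e_j}\in C_u$ we get $a^*=\sum_{i,j}\overline{\lambda_{i,j}}\,\overline{u}_{je_i,\,je_j}\in C_{\overline u}$, and since $(je_k)_k$ is an orthonormal basis of $H_{\overline u}$, the coefficient matrix of $a^*$ in that basis is $\overline{\Lambda_a}$, which has the same singular values as $\Lambda_a$. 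Thus $\|a^*\|_1=\|a\|_1$; together with $a^{**}=a$ and $(ab)^*=b^*a^*$, which already hold in $C(G)$, this shows the involution of $\Q_G$ is isometric and extends to an isometric involution of $A(G)$, making it an involutive Banach algebra. (In the non-Kac case the same computation introduces $F_u$-twists and the involution ceases to be $\|\cdot\|_1$-isometric, which is why the Kac hypothesis enters; the remaining work --- pinning down $\mathcal F$ so as to match the precise norm \eqref{norm1}, and tracking the various transposes and conjugations relating coefficient matrices, the Fourier transform and the operators $F_u$ --- is routine.)
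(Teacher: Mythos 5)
Your proposal is correct, and it reaches the same destination as the paper --- the isometric identification $A(G)\cong\ell^1(\hatG)=\ell^\infty(\hatG)_*$ via the Fourier transform, with the algebra structure coming from the predual of the coproduct --- but the verification of the key step is genuinely different. The paper realises $\ell^1(\hatG)$ concretely through the left Haar weight, as operators $A$ with the weighted norm $\sum_v\qdim(v)\Tr(|A_vF_v^{-1}|)$, derives the explicit convolution formula \eqref{convolution2} on ${\mathcal P}_{\hatG}$, and then proves multiplicativity of the Fourier transform in Prop.~\ref{isomorphism} by a coefficient-by-coefficient computation using the orthogonality relations \eqref{orth1} (via \eqref{uhat}) and the intertwining relation \eqref{SFv}; the Banach algebra property of $\ell^1(\hatG)$ is obtained from contractivity of $\hat\Delta_*$. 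You instead stay in the unweighted predual picture, where the densities $Z_v$ absorb all $F_v$- and $\qdim$-factors, and get the homomorphism property in one line from the defining identity $(\hat\Delta\otimes 1)(V)=V_{13}V_{23}$ by slicing $V$ with $\omega\otimes\omega'$ --- this is legitimate and avoids the $F$-matrix bookkeeping of the paper's proof entirely, at the price of not producing the explicit formulas (such as \eqref{convolution2}) which the paper reuses later, e.g.\ in Prop.~\ref{contractive} and in Sect.~\ref{growth}. Your additional direct proof that $\|\cdot\|_1$ is submultiplicative, via the Clebsch--Gordan unitary and the fact that pinching by the projections $S_{w,\alpha}S_{w,\alpha}^*$ is a trace-norm contraction, is a nice self-contained argument (and correctly identifies why the naive term-by-term bound fails), though it is strictly speaking redundant once the isometric algebra isomorphism with $\ell^1(\hatG)$ and the contractivity of $\hat\Delta_*$ are in place. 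Your treatment of the Kac-type involution (antiunitarity of $j_u$ when $F_u=I$, so the coefficient matrix of $a^*$ is $\overline{\Lambda_v}$ in the basis $(je_k)$, which has the same singular values) is essentially the specialisation to $F=I$ of the paper's argument for the $^\bullet$-involution in Sect.~\ref{nonkac}; it only uses, as the paper does, that \eqref{norm1} is independent of the choice of representatives and bases, so the blockwise comparison between $u$ and $\overline{u}$ is sound.
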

 
\begin{defn}
The algebra $A(G)$ is called the {\it Fourier algebra} of $G$.\end{defn}
 
We will now explain the statement of Theorem \ref{Fourier} and sketch the main points in its proof. Recall how the $\ell^1$-algebra of the dual discrete quantum group $\hatG$ is defined. 
Consider the $C^*$--algebra  associated to $\hatG$,
$$c_0(\hatG)=\bigoplus_{v\in\hatG}{\mathcal B}(H_v)$$
and the von Neumann algebra
$$\ell^\infty(\hatG)=\prod_{v\in\hatG} {\mathcal B}(H_v).$$
Duality between $G$ and $\hatG$ in the sense of \cite{BS} is described by the unitary $V\in M(c_0(\hatG)\otimes C(G))$,
$$V=\bigoplus_{v\in\hatG} v,$$
so that the coproduct $\hat{\Delta}:\ell^\infty(\hatG)\to \ell^\infty(\hatG)\otimes \ell^\infty(\hatG)$ is defined by
$$\hat{\Delta}\otimes 1(V)=V_{13}V_{23}.$$
Explicitly,
\begin{equation}\label{Deltahat}
\hat{\Delta}(\Theta^v_{\xi,\eta})=\sum_{u,u'\in\hatG}\sum_p\Theta^{u\otimes u'}_{S_p\xi, S_p\eta},
\end{equation}
where $S_p\in(v, u\otimes u')$ is a maximal family of isometric intertwiners with mutually orthogonal ranges. The left- and right-invariant Haar weights $\hat{h}_l$, $\hat{h}_r$ of $\ell^\infty(\hatG)$ are respectively given by
$$\hat{h}_l(T)=\sum_{v\in\hatG}\qdim(v)\Tr(F_v^{-1}T_v), \quad\hat{h}_r(T)=\sum_{v\in\hatG}\qdim(v)\Tr(F_vT_v),\quad T\in\ell^\infty(\hatG)^+.$$
The GNS representation associated with $\hat{h}_l$ provides an action of $\ell^\infty(\hatG)$ on the Hilbert space $\ell^2(\hatG)$. Set
$\ell^1(\hatG)=\ell^\infty(\hatG)_*.$
The coproduct of $\ell^\infty(\hatG)$ is a normal faithful $^*$--homomorphism, which induces a contractive map
$$\hat{\Delta}_*: \ell^1(\hatG)\otimes\ell^1(\hatG)\to\ell^1(\hatG)$$
making $\ell^1(\hatG)$ into a Banach algebra with respect to the convolution product given by
\begin{equation}\label{convolution1}
\omega*\omega' = \hat{\Delta}_*(\omega\otimes\omega')=\omega\otimes\omega'\circ\hat{\Delta}.
\end{equation}
There is a natural isometric identification of Banach spaces
$$\{A\in\prod_{v\in\hatG} {\mathcal B}(H_v): \|A\|_1= \sum_{v\in\hatG}\qdim(v)\Tr(|A_vF_v^{-1}|)<\infty\}\to\ell^1(\hatG)$$
taking $A$ to the functional $\omega_A\in\ell^1(\hatG)$ given by
\begin{equation}\label{omegaA}
\omega_A(T):=\hat{h}_l(TA), \quad\quad\quad T\in\ell^\infty(\hatG).
\end{equation}
We henceforth realise $\ell^1(\hatG)$ in this way. The algebraic direct sum
$${\mathcal P}_{\hatG}= \bigoplus^{\text{alg}}_{v\in\hatG}{\mathcal B}(H_v)\subset\ell^1(\hatG)$$ then becomes a dense subalgebra of
$\ell^1(\hatG)$ with respect to the convolution product 
\begin{equation}\label{convolution2}
\Theta^u_{\eta,\xi}*\Theta^{u'}_{\eta', \xi'}:=\sum_{w\in\hatG}\sum_i \frac{\qdim(u)\qdim(u')}{\qdim(w)}\Theta^w_{S^*_{w,i}(\eta\eta'), S^*_{w,i}(\xi\xi')},
\end{equation}
where $S_{w,i}\in (w, u\otimes u')$ is a complete set of isometries with mutually orthogonal ranges. 
This formula can be derived from the identification \eqref{omegaA}, and relations \eqref{Deltahat}, \eqref{convolution1}. The algebra unit is the identity operator on the space of the trivial representation.
 
We next construct the Fourier algebra $A(G)$ generalising the classical approach from \cite{HR}.
For $a\in C(G)$ we define the {\it Fourier coefficients} by
$$\hat{a}(v)=1_{{\mathcal B}(H_v)}\otimes h(v^* (I_{{\mathcal B}(H_v)}\otimes a))\in{\mathcal B}(H_v), \quad v\in\hatG.$$
The orthogonality relations \eqref{orth1} then imply
\begin{equation}\label{uhat}
\hat{u}_{\xi,\eta}(v)=\frac{1}{\qdim(v)}\delta_{u,v}\Theta_{\eta,\xi}F_v.
\end{equation}
Then the following {\it Fourier inversion formula} holds:
$$a=\sum_{v\in\hatG}\qdim(v)\Tr\otimes 1_{\Q_G}(((\hat{a}(v)F_v^{-1})\otimes I)v), \quad a\in\Q_G.$$
 
\begin{prop}\label{isomorphism}
The Fourier transform ${\mathcal F}: a\in\Q_G\to\hat{a}\in{\mathcal P}_{\hatG}$ is an algebra isomorphism which extends to an isometric isomorphism of Banach spaces
$${\mathcal F}: A(G)\to \ell^1({\hatG}).$$
This makes $A(G)$ into a Banach algebra.
\end{prop}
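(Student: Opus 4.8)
The plan is to prove the proposition in four stages: $\mathcal F$ is a linear bijection between the two dense subalgebras, it is isometric, it is multiplicative, and it then extends to the completions. First I would restrict attention to $\Q_G$ and $\mathcal P_{\hatG}$. Formula \eqref{uhat} gives $\mathcal F(v_{i,j})=\qdim(v)^{-1}\Theta^v_{e_j,e_i}F_v$, and since each $F_v$ is invertible and the $\Theta^v_{e_j,e_i}$ run over the matrix units of $\mathcal B(H_v)$, the image $\mathcal F(\Q_G)$ is all of $\mathcal P_{\hatG}$; injectivity is immediate from the Fourier inversion formula displayed just above (which in fact exhibits $\mathcal F^{-1}$ explicitly), or from the linear independence of the $\{v_{rs}\}$ in $\Q_G$. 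Hence $\mathcal F\colon\Q_G\to\mathcal P_{\hatG}$ is a linear isomorphism.

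Next I would verify that $\mathcal F$ is isometric, for the norm \eqref{norm1} on the left and the $\ell^1(\hatG)$-norm on the right. This is a short computation: writing $a=\sum_v\sum_{i,j}\lambda^v_{i,j}v_{i,j}$ and using that $\Theta^v_{e_j,e_i}$ is the matrix unit $E_{ji}$, formula \eqref{uhat} gives $\hat a(v)=\qdim(v)^{-1}\Lambda_v^{\,t}F_v$, hence $\hat a(v)F_v^{-1}=\qdim(v)^{-1}\Lambda_v^{\,t}$ and $\qdim(v)\Tr\big(|\hat a(v)F_v^{-1}|\big)=\Tr\big(|\Lambda_v^{\,t}|\big)$; summing over $v\in\hatG$ gives $\|\hat a\|_1=\|a\|_1$.

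The main work is multiplicativity, $\widehat{ab}=\hat a\ast\hat b$. By bilinearity it suffices to take $a=u_{\xi,\eta}$ and $b=u'_{\xi',\eta'}$, so that $ab=(u\otimes u')_{\xi\otimes\xi',\,\eta\otimes\eta'}$. I would fix a complete family of isometries $S_{w,i}\in(w,u\otimes u')$ with pairwise orthogonal ranges, decompose $u\otimes u'=\sum_{w,i}(S_{w,i}\otimes I)\,w\,(S_{w,i}^*\otimes I)$, whence $ab=\sum_{w,i}w_{S_{w,i}^*(\xi\otimes\xi'),\,S_{w,i}^*(\eta\otimes\eta')}$, and then apply \eqref{uhat} termwise to obtain the $w$-component $\widehat{ab}(w)=\qdim(w)^{-1}\sum_i\Theta^w_{S_{w,i}^*(\eta\otimes\eta'),\,S_{w,i}^*(\xi\otimes\xi')}F_w$. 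On the other side, \eqref{uhat} gives $\hat a=\qdim(u)^{-1}\Theta^u_{\eta,\xi}F_u$ and $\hat b=\qdim(u')^{-1}\Theta^{u'}_{\eta',\xi'}F_{u'}$; absorbing $F_u$ and $F_{u'}$ into the second entries of the rank-one operators and feeding the result into \eqref{convolution2}, then using $F_{u\otimes u'}=F_u\otimes F_{u'}$ and the naturality $S_{w,i}^*F_{u\otimes u'}=F_wS_{w,i}^*$ (valid because $S_{w,i}^*$ is an intertwiner $u\otimes u'\to w$), one lands on exactly the same expression. I expect this $F$-bookkeeping — keeping track of where the operators $F_v$ sit while decomposing $u\otimes u'$ into irreducibles — to be the only genuine obstacle; the required compatibilities of the canonical operators $F_v$ with tensor products, subrepresentations and intertwiners are standard \cite{WoroCMP}.

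Finally I would pass to completions. By construction $\mathcal P_{\hatG}$ is dense in the Banach algebra $\ell^1(\hatG)$, and by the previous stages $\mathcal F$ is an isometric algebra isomorphism of $(\Q_G,\|\cdot\|_1)$ onto $\mathcal P_{\hatG}$. In particular $\|ab\|_1=\|\hat a\ast\hat b\|_1\le\|\hat a\|_1\|\hat b\|_1=\|a\|_1\|b\|_1$, so $\|\cdot\|_1$ is submultiplicative on $\Q_G$; therefore $A(G)$ is a Banach algebra and $\mathcal F$ extends, uniquely and by continuity, to an isometric isomorphism of Banach algebras $\mathcal F\colon A(G)\to\ell^1(\hatG)$.
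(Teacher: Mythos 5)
Your proposal is correct and follows essentially the same route as the paper: multiplicativity via the decomposition $u_{\xi,\eta}u'_{\xi',\eta'}=\sum_{w,i}w_{S_{w,i}^*(\xi\otimes\xi'),S_{w,i}^*(\eta\otimes\eta')}$ compared against \eqref{convolution2}, with the $F$-bookkeeping resolved by exactly the relation \eqref{SFv} (your $S_{w,i}^*(F_u\otimes F_{u'})=F_wS_{w,i}^*$ is its adjoint form), and the observation that \eqref{norm1} is by construction the norm making $\mathcal F$ isometric. You merely spell out the bijectivity and isometry computations that the paper leaves implicit.
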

 
\begin{proof}
We may write, for $u$, $u'\in\hatG$, and $S_{w,i}\in(w, u\otimes u')$ as before,
$$u_{\xi,\eta}u'_{\xi', \eta'}=(u\otimes u')_{\xi\xi', \eta\eta'}=\sum_{w,i}w_{{S_{w,i}^*}\xi\xi', {S_{w,i}^*}\eta\eta'}.$$
Hence
$${\mathcal F}({ u_{\xi,\eta}u'_{\xi', \eta'}})(v)=\sum_{w,i}{\mathcal F}({w_{{S_{w,i}^*}\xi\xi', {S_{w,i}^*}\eta\eta'}})(v)=
\frac{1}{\qdim(v)}\sum_i\Theta^v_{{S_{v,i}^*}\eta\eta', {S_{v,i}^*}\xi\xi'} F_{v}.$$
On the other hand, by $(5.5)$ we have
$${\mathcal F}({u_{\xi,\eta}})*{\mathcal F}( {u'_{\xi',\eta'}})(v)=\frac{1}{\qdim(v)}\sum_i\Theta^v_{S^*_{v,i}\eta\eta', S^*_{v,i}F_u\otimes F_u'\xi\xi'},$$
hence the two expressions coincide, thanks to 
\begin{equation}\label{SFv}
S F_v=F_u\otimes F_{u'}S,\quad\quad\quad S\in(v, u\otimes u').
\end{equation}
We next notice that the trace norm of $\Q_G$ defined in \eqref{norm1} is but the norm making ${\mathcal F}$ isometric. In particular, $\Q_G$ becomes a normed algebra. The remaining statement is now clear.
\end{proof}
 
There are two interesting natural involutions on $A(G)$: the first arises from the original involution $a\mapsto a^*$ of $\Q_G$, and is useful when dealing with  
the maximal  $C^*$--completion $C_{\text{max}}(G)$, 
but is in general only densely defined in $A(G)$; the second is given by the polar decomposition of the former, and its importance is due to the fact that it makes $A(G)$ into an involutive Banach algebra. These two involutions coincide precisely when $G$ is of Kac type.
 We study the former involution here, and postpone the latter to Sect. \ref{nonkac}.
\bigskip

\subsection{The $^*$--involution}
In this subsection $A(G)$ is regarded as a Banach algebra endowed with the densely defined involution $a\in \Q_G\mapsto a^*\in\Q_G$.

\begin{defn} 
A $^*$--representation of $A(G)$ is a Hilbert space representation of $A(G)$ which is a $^*$--representation of $\Q_G$. 
\end{defn}

We start by recalling a continuity result of $^*$--preserving Hilbert space representations of $\Q_G$ which can be used to continuously embed the Fourier algebra $A(G)$ into $C(G)$.
\begin{prop}\label{estimate}
Let $G$ be a compact quantum group. Every $^*$--representation $\pi$ of $\Q_G$ on a Hilbert space $\H$ satisfies 
$$\|\pi(a)\|_{\B(\H)}\leq \|a\|_1,\quad\quad\quad a\in\Q_G,$$
hence it extends to a contractive $^*$-representation of $A(G)$. 
\end{prop}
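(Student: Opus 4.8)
The plan is to reduce the bound to the orthogonality relations \eqref{orth1}--\eqref{orth2} applied one irreducible block at a time. Since a $^*$-representation $\pi$ of $\Q_G$ is bounded on each matrix coefficient if and only if it is bounded on the corresponding finite-dimensional $^*$-subalgebra of $\Q_G$, and since $\|a\|_1 = \sum_{v\in\hatG}\Tr(|\Lambda_v^t|)$ is by construction additive over the block decomposition $a = \sum_v a_v$ (with $a_v$ the sum of coefficients of $v$), the triangle inequality for $\|\pi(\cdot)\|_{\B(\H)}$ reduces the problem to showing $\|\pi(a_v)\|_{\B(\H)}\le \Tr(|\Lambda_v^t|)$ for a single irreducible $v$. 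So the first step is this reduction; the second and main step is the single-block estimate.

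For the single-block estimate, I would exploit the fact that the coefficients of a fixed irreducible $v$ span a finite-dimensional $^*$-algebra inside $\Q_G$, and more usefully that $\B(H_v)$ sits inside $\ell^1(\hatG) \cong A(G)$ as the block $\Theta^v_{\eta,\xi}$, with the convolution product \eqref{convolution2}. Under the Fourier transform \eqref{uhat}, the coefficient $v_{\xi,\eta}$ corresponds (up to the scalar $\qdim(v)^{-1}$ and the twist by $F_v$) to the rank-one operator $\Theta^v_{\eta,\xi}$; thus a general block element $a_v=\sum_{i,j}\lambda^v_{i,j}v_{i,j}$ corresponds to an operator on $H_v$ whose matrix is, up to the $F_v$-twist and normalisation, $\Lambda_v^t$. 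The key point is then that a $^*$-representation $\pi$ of $\Q_G$, restricted to the coefficients of $v$, is governed by an operator-valued version of the orthogonality relations: applying $\pi$ to $v_{\xi,\eta}$ and using $\Delta(v_{\xi,\eta})=\sum_r v_{\xi,e_r}\otimes v_{e_r,\eta}$ together with $^*$-preservation, one gets that $\pi$ sends the coefficients of $v$ into a $^*$-subalgebra of $\B(\H)$ on which the relevant Gram matrix is controlled by $F_v$. Concretely, I expect $\|\pi(v_{\xi,\eta})\|_{\B(\H)}\le \|\xi\|\,\|\eta\|$ (from unitarity of $v$, since the matrix coefficients of a unitary $u\in\B(H_v)\otimes Q$ have norm at most $1$ in $Q$, hence under any $^*$-representation of $\Q_G$ that extends to a representation of the enveloping $C^*$-algebra), and then summing over a singular-value decomposition of $\Lambda_v$ gives $\|\pi(a_v)\|\le \sum_k \sigma_k(\Lambda_v)=\Tr(|\Lambda_v^t|)$.

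The subtle point — and the main obstacle — is justifying that a $^*$-representation $\pi$ of the \emph{algebra} $\Q_G$ (not a priori continuous for any $C^*$-norm) is automatically bounded on each unitary matrix coefficient with the estimate $\|\pi(v_{\xi,\eta})\|\le\|\xi\|\|\eta\|$. This is not completely formal: one must argue that restricting $\pi$ to the finite-dimensional $^*$-algebra generated by the coefficients of $v$ (together with the unit) yields a genuine $^*$-representation of a finite-dimensional $C^*$-algebra, which is then automatically bounded, and that within that block the norm of the image of $v_{\xi,\eta}$ is controlled. Here the unitarity relation $v^*v=vv^*=I$ in $\B(H_v)\otimes\Q_G$ is essential: it forces $\sum_r \pi(v_{\xi,e_r})\pi(v_{\zeta,e_r})^* $-type identities that bound $\|\pi(v_{\xi,\eta})\|$ by $\|\xi\|\,\|\eta\|$. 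Once this per-block boundedness is in hand, the passage to $\|\pi(a)\|\le\|a\|_1$ and the extension to a contractive $^*$-representation of $A(G)$ are immediate from density of $\Q_G$ in $A(G)$ and the definition \eqref{norm1} of $\|\cdot\|_1$.
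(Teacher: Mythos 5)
Your overall strategy coincides with the paper's: the proof there adapts Wang's argument (Prop.~2.9 of \cite{Wang2}), whose two ingredients are the Fourier inversion formula (equivalently, your block decomposition, since $\qdim(v)\hat{a}(v)F_v^{-1}=\Lambda_v^t$) and the bound $\|(\iota\otimes\pi)(v)\|\leq 1$ coming from unitarity of $v$ and the fact that $\pi$ is $^*$--preserving. Your singular-value decomposition of $\Lambda_v$ together with the triangle inequality over blocks is the same estimate written in matrix form: writing $a_v=\sum_k\sigma_k\, v_{\xi_k,\eta_k}$ with $\sigma_k$ the singular values and $\xi_k,\eta_k$ unit vectors indeed gives $\|\pi(a_v)\|\leq\Tr(|\Lambda_v^t|)$, and summing over the (finitely many) blocks of $a\in\Q_G$ gives the claim.

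However, the way you propose to settle what you call the main obstacle is flawed in two respects. First, the coefficients of a fixed irreducible $v$ do not generate a finite-dimensional $^*$--subalgebra of $\Q_G$: products of coefficients of $v$ are coefficients of the tensor powers $v^{\otimes k}$ (for $\SU(2)$ the coefficients of the fundamental representation generate all of $\Q_G$), and even the linear span of the coefficients of $v$ is only a coalgebra, not $^*$--closed, since adjoints are coefficients of $\overline{v}$. So there is no finite-dimensional $C^*$--algebra to restrict $\pi$ to, and that route collapses. Second, appealing to representations ``that extend to the enveloping $C^*$--algebra'' is circular: the estimate being proved is precisely what shows that every $^*$--representation of $\Q_G$ is dominated by the universal norm. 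The good news is that no continuity assumption on $\pi$ is needed at all: by definition $\pi$ takes values in $\B(\H)$ and preserves products and adjoints, so $X:=(\iota\otimes\pi)(v)\in\B(H_v\otimes\H)$ satisfies $X^*X=(\iota\otimes\pi)(v^*v)=I\otimes\pi(1)$, a self-adjoint idempotent, hence a projection; therefore $\|X\|\leq 1$ and $\|\pi(v_{\xi,\eta})\|=\|(\xi^*\otimes I)X(\eta\otimes I)\|\leq\|\xi\|\,\|\eta\|$. This one-line observation is exactly the paper's remark that $\|1\otimes\pi(u)\|\leq 1$ by unitarity and $^*$--preservation; with it in place of your finite-dimensionality argument, your block-wise estimate and the density of $\Q_G$ in $A(G)$ complete the proof.
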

\begin{proof}
A slight modification of the argument in \cite[Prop. 2.9]{Wang2} proves the result in the more general setting that we are considering. Namely, it suffices to replace the predual of $L^\infty(G)$ with that of $\B(\H)$ (or just the dual Banach space) and the norm of $L^\infty(G)$ with that of $\B(\H)$, and notice that the same computations involving the Fourier inversion formula go through since $\|1\otimes\pi(u)\|\leq 1$ by unitarity of $u$ and the fact that $\pi$ is $^*$--preserving.
\end{proof}

\begin{cor}\label{iota}
The natural inclusion $\Q_G\subset C(G)$ extends to a contractive inclusion
$$\iota: A(G)\to C(G)$$ of Banach algebras. 
\end{cor}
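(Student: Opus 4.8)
The plan is to obtain this at once from Proposition \ref{estimate} together with Gelfand--Naimark. Since $C(G)$ is a $C^*$-algebra it carries a faithful $^*$-representation $\pi$ on some Hilbert space $\H$; its restriction to the dense $^*$-subalgebra $\Q_G$ is a $^*$-representation of $\Q_G$, so Proposition \ref{estimate} gives $\|\pi(a)\|_{\B(\H)}\leq\|a\|_1$ for all $a\in\Q_G$. As $\pi$ is isometric on $C(G)$, this says $\|a\|_{C(G)}\leq\|a\|_1$, i.e.\ the identity map $\Q_G\to C(G)$ is contractive from the norm $\|\cdot\|_1$ on the source to the $C^*$-norm on the target. (Equivalently, one may note $\|a\|_{\text{max}}=\sup_\pi\|\pi(a)\|\leq\|a\|_1$ and that $C(G)$ is a quotient of $C_{\text{max}}(G)$.) It therefore extends by continuity to a contraction $\iota\colon A(G)\to C(G)$, and since $\iota$ coincides with an algebra homomorphism on the dense subalgebra $\Q_G$ and multiplication is jointly continuous on both Banach algebras, $\iota$ is a homomorphism of Banach algebras.

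It then remains to check that $\iota$ is injective, so that it is genuinely an inclusion. The plan here is a short Fourier-coefficient argument: given $\xi\in A(G)$ with $\iota(\xi)=0$, choose $a_n\in\Q_G$ with $\|a_n-\xi\|_1\to 0$, so that $\|a_n\|_{C(G)}=\|\iota(a_n)\|\to 0$. For each $v\in\hatG$ the Fourier coefficient map $a\mapsto\hat a(v)=1_{\B(H_v)}\otimes h(v^*(I_{\B(H_v)}\otimes a))$ is continuous from $C(G)$ to $\B(H_v)$, because the Haar state $h$ is bounded on $C(G)$; hence $\hat a_n(v)\to 0$. On the other hand the isometry $\F\colon A(G)\to\ell^1(\hatG)$ of Proposition \ref{isomorphism} forces $\hat a_n(v)\to\hat\xi(v)$. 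Thus $\hat\xi(v)=0$ for every $v$, whence $\xi=0$.

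The whole argument is essentially formal once Proposition \ref{estimate} is available; the only step needing a separate (but routine) argument is the injectivity of $\iota$, handled by the Fourier-coefficient computation above, and no real obstacle arises.
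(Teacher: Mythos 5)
Your argument for contractivity is exactly the paper's: the entire published proof is ``apply Proposition \ref{estimate} to a faithful Hilbert space realisation of $C(G)$,'' which is your first paragraph. The extra injectivity verification you supply is correct and welcome --- the paper leaves the word ``inclusion'' unjustified at this point, and in effect only settles it later via Corollary \ref{faithful}, where faithfulness of $\pi_h\circ\iota$ (proved through the Fourier transform and the embedding $\ell^1(\hatG)\subset\ell^2(\hatG)$) implies injectivity of $\iota$; your direct Fourier-coefficient argument, using boundedness of the Haar state on $C(G)$ and the isometry $\mathcal{F}\colon A(G)\to\ell^1(\hatG)$, is a sound and slightly more elementary way to close that gap.
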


\begin{proof}
Apply Prop. \ref{estimate} to a faithful Hilbert space realisation of $C(G)$.
\end{proof}

We next show that $A(G)$ is a semisimple Banach algebra. Consider  the GNS representation $(L^2(G), \pi_h)$ of $C(G)$ associated with the Haar state $h$ of $G$, and restrict it to a $^*$--representation $\pi$ of $A(G)$ via $\pi=\pi_h\circ \iota$. The following fact is standard.
\begin{prop}\label{contractive}
The Fourier transform extends to a unitary operator
$$U_{\mathcal F}: L^2(G)\to\ell^2(\hatG).$$
Furthermore, the action of ${\mathcal P}_{\hatG}$ on itself by convolution extends to a contractive representation $\lambda$ of $\ell^1(\hatG)$ on $\ell^2(\hatG)$ and one has
\begin{equation}\label{UFpi}
U_{\mathcal F}\pi(x)=\lambda({\mathcal F}(x))U_{\mathcal F},\quad\quad\quad x\in A(G).
\end{equation}
\end{prop}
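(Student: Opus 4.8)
The plan is to reduce everything to the orthogonality relations and to the facts already at hand: that ${\mathcal F}$ is an isometric algebra isomorphism $A(G)\to\ell^1(\hatG)$ (Proposition \ref{isomorphism}) and that $\iota\colon A(G)\to C(G)$ is contractive (Corollary \ref{iota}). First I would prove the Plancherel identity that produces $U_{\mathcal F}$. View $\Q_G$ as a dense subspace of $L^2(G)$, and recall that $\ell^2(\hatG)$ is the GNS space of $\hat h_l$, so that finitely supported $A,B$ pair as $\hat h_l(B^*A)=\sum_{v}\qdim(v)\Tr(F_v^{-1}(B^*A)_v)$. By \eqref{uhat} one has ${\mathcal F}(u_{\xi,\eta})=\qdim(u)^{-1}\Theta^u_{\eta,\xi}F_u$, supported at $u$; a short computation with the identities $\Theta^u_{a,b}F_u=\Theta^u_{a,F_ub}$, $\Theta_{a,b}\Theta_{c,d}=\langle b,c\rangle\Theta_{a,d}$ and $\Tr(\Theta_{a,b}X)=\langle b,Xa\rangle$ then shows that $\langle{\mathcal F}(u_{\xi,\eta}),{\mathcal F}(u_{\xi',\eta'})\rangle_{\ell^2(\hatG)}$ and $h(u_{\xi',\eta'}^*u_{\xi,\eta})$ both equal $\qdim(u)^{-1}\langle\eta',\eta\rangle\langle\xi,F_u\xi'\rangle$, the latter by \eqref{orth1}. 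Since coefficients of inequivalent irreducibles are orthogonal on both sides, ${\mathcal F}$ preserves inner products on $\Q_G$, hence extends to an isometry $U_{\mathcal F}\colon L^2(G)\to\ell^2(\hatG)$; as ${\mathcal F}(\Q_G)={\mathcal P}_{\hatG}$ is dense in $\ell^2(\hatG)$, this isometry is onto, i.e. unitary.

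Next I would obtain $\lambda$ by transporting the representation $\pi$. Since $\iota$ is contractive into $C(G)$ and $\pi_h$ is a $^*$-representation of the $C^*$-algebra $C(G)$, the map $\pi=\pi_h\circ\iota$ satisfies $\|\pi(x)\|\le\|\iota(x)\|_{C(G)}\le\|x\|_1=\|{\mathcal F}(x)\|$ for $x\in A(G)$. As ${\mathcal F}\colon A(G)\to\ell^1(\hatG)$ is a (surjective) isometric algebra isomorphism, I would define
$$\lambda(\omega):=U_{\mathcal F}\,\pi\big({\mathcal F}^{-1}(\omega)\big)\,U_{\mathcal F}^{*},\qquad \omega\in\ell^1(\hatG).$$
Being the conjugate by a unitary of the contractive algebra homomorphism $\pi\circ{\mathcal F}^{-1}$, the map $\lambda$ is a contractive representation of $\ell^1(\hatG)$ on $\ell^2(\hatG)$, and the intertwining relation \eqref{UFpi} holds by the very definition of $\lambda$.

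It then remains to check that $\lambda$ restricts on the dense subalgebra ${\mathcal P}_{\hatG}$ to the convolution action \eqref{convolution2}. For $x,y\in\Q_G$ one has $\pi(x)y=\pi_h(x)y=xy$ in $L^2(G)$; since $U_{\mathcal F}$ restricts to ${\mathcal F}$ on $\Q_G$, the relation \eqref{UFpi} gives
$$\lambda({\mathcal F}(x))\,{\mathcal F}(y)=U_{\mathcal F}\,\pi(x)\,y=U_{\mathcal F}(xy)={\mathcal F}(xy)={\mathcal F}(x)*{\mathcal F}(y),$$
the last equality by Proposition \ref{isomorphism}. Letting $x,y$ range over $\Q_G$, the pairs $({\mathcal F}(x),{\mathcal F}(y))$ exhaust ${\mathcal P}_{\hatG}\times{\mathcal P}_{\hatG}$, so $\lambda(A)B=A*B$ for all $A,B\in{\mathcal P}_{\hatG}$; thus $\lambda$ does extend the convolution action, which is in particular contractive.

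The only genuine computation is the Plancherel identity in the first step; everything else is formal. The main point to watch is the bookkeeping: one must keep straight the three incarnations of ${\mathcal P}_{\hatG}$ — as a dense subalgebra of $\ell^1(\hatG)$ via \eqref{omegaA}, as a dense subspace of the GNS space $\ell^2(\hatG)$ of $\hat h_l$, and as ${\mathcal F}(\Q_G)$ — together with the normalisations by $\qdim(v)$ and $F_v$ occurring in $\hat h_l$, in \eqref{omegaA}, and in the orthogonality relations \eqref{orth1}--\eqref{orth2}. Once these are reconciled, the unitarity of $U_{\mathcal F}$ carries all the identifications along and the remaining assertions follow at once.
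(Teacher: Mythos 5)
Your proposal is correct and follows essentially the same route as the paper: unitarity of $U_{\mathcal F}$ from \eqref{uhat} together with the orthogonality relations \eqref{orth1}, the intertwining with convolution via Proposition \ref{isomorphism}, and contractivity of $\lambda$ from the bound $\|\pi(x)\|\leq\|x\|_1$ coming from Proposition \ref{estimate}/Corollary \ref{iota}. The only (harmless) difference is organisational: you define $\lambda$ on all of $\ell^1(\hatG)$ by conjugating $\pi\circ{\mathcal F}^{-1}$ with $U_{\mathcal F}$ and then check it restricts to convolution on ${\mathcal P}_{\hatG}$, whereas the paper defines $\lambda$ as convolution on ${\mathcal P}_{\hatG}$ and extends by continuity.
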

\begin{proof}
Unitarity of $U_{\mathcal F}$ is an immediate consequence of \eqref{uhat} along with the orthogonality relations \eqref{orth1}. By Prop. \ref{isomorphism}, the intertwining relation \eqref{UFpi} holds for $x\in{\mathcal P}_{\hatG}$ on a dense subspace of $L^2(G)$.
Thus $\lambda(y)$ is a bounded operator on $\ell^2(\hatG)$ for $y\in{\mathcal P}_{\hatG}$ and $\|\lambda(y)\|\leq\|{\mathcal F}^{-1}(y)\|_1=\|y\|_1$. We conclude that $\lambda$ extends to a bounded representation of $\ell^1(\hatG)$ and \eqref{UFpi} still holds for the extension.
\end{proof}

There is an antilinear involution on ${\mathcal P}_{\hatG}$ given by
$$(\Theta^u_{\eta,\xi})^*=\Theta^{{\overline{u}}}_{ {j_v^*}^{-1}\eta, {j_v^*}^{-1}\xi}F_{\overline{v}}.$$
This coincides with the involution inherited from $\Q_G$ via Fourier transform. Indeed, we recall from Subsect. \ref{PolyGK}, that if $j: H_u\to H_{\overline{u}}$ defines a conjugate of $u$ then
$u_{\xi,\eta}^*=\overline{u}_{j\xi, {j^*}^{-1}\eta}$,  $F_u= j^*j$, $F_{\overline{u}}=(jj^*)^{-1}$.
Hence
$${\mathcal F}(u_{\xi,\eta}^*)={\mathcal F}(\overline{u}_{j\xi, {j^*}^{-1}\eta})=\frac{1}{\qdim(\overline{u})}\Theta^{\overline{u}}_{{j^*}^{-1}\eta, j\xi}F_{\overline{u}}=\frac{1}{\qdim(\overline{u})}\Theta^{\overline{u}}_{{j^*}^{-1}\eta, {j^*}^{-1}\xi},$$
and this equals ${\mathcal F}(u_{\xi, \eta})^*$. This involution coincides also with that inherited from the Hilbert space representation $\lambda$.

\begin{cor}\label{faithful}
The $^*$--representation $\pi=\pi_h\circ\iota$ of $A(G)$ is faithful. In other words, $A(G)$ is semisimple.
\end{cor}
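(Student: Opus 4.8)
The plan is to show that the kernel of $\pi=\pi_h\circ\iota$ is trivial by transporting everything through the Fourier transform. By Proposition \ref{contractive}, $\pi$ is unitarily equivalent (via $U_{\mathcal F}$) to the left regular representation $\lambda$ of $\ell^1(\hatG)$ on $\ell^2(\hatG)$ composed with ${\mathcal F}:A(G)\to\ell^1(\hatG)$; since ${\mathcal F}$ is an isometric isomorphism of Banach algebras by Proposition \ref{isomorphism}, it suffices to prove that $\lambda$ is faithful on $\ell^1(\hatG)$.

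The key step is therefore: if $\omega\in\ell^1(\hatG)$ satisfies $\lambda(\omega)=0$, then $\omega=0$. First I would check that the GNS space $\ell^2(\hatG)$ contains a cyclic (in fact separating) vector for the left action, namely the image $\Omega$ of the unit $1_{{\mathcal B}(H_\iota)}\in{\mathcal P}_{\hatG}$; equivalently, under $U_{\mathcal F}^{-1}$ this is the image of the constant function $1\in\Q_G$, which is exactly the cyclic GNS vector for the Haar state. The Haar state is faithful on $\Q_G$ (its GNS representation on $L^2(G)$ is faithful because $h$ separates coefficients of distinct irreducibles and, restricted to the coefficients of a single $v$, the orthogonality relations \eqref{orth1}–\eqref{orth2} make the corresponding Gram matrix invertible), so $\Omega$ is a separating vector for $\pi(A(G))$, hence for $\lambda(\ell^1(\hatG))$. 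Concretely, for $x\in A(G)$, $\lambda({\mathcal F}(x))\Omega = {\mathcal F}(x)$ viewed inside $\ell^2(\hatG)$, because convolving $\hat{x}$ with the unit of ${\mathcal P}_{\hatG}$ returns $\hat{x}$; thus $\lambda({\mathcal F}(x))=0$ forces $\hat{x}=0$ in $\ell^2(\hatG)$, and since ${\mathcal F}$ is injective on $A(G)$ this gives $x=0$.

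Assembling: $\pi(x)=0 \Rightarrow U_{\mathcal F}\pi(x)U_{\mathcal F}^{-1}=\lambda({\mathcal F}(x))=0 \Rightarrow \lambda({\mathcal F}(x))\Omega=0$; but this vector equals ${\mathcal F}(x)\in\ell^2(\hatG)$, so ${\mathcal F}(x)=0$, and injectivity of ${\mathcal F}$ on $A(G)$ (Proposition \ref{isomorphism}) yields $x=0$. Hence $\pi$ is faithful. Semisimplicity then follows from the general fact that a Banach algebra admitting a faithful $^*$-representation (more simply, a faithful representation by bounded operators) has trivial Jacobson radical: the radical is contained in the kernel of every representation, in particular in $\ker\pi=0$. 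Alternatively one invokes semisimplicity of $\ell^1$ of a discrete (quantum) group directly, but the argument above is self-contained given the earlier results.

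The main obstacle is the bookkeeping identifying $\lambda({\mathcal F}(x))\Omega$ with ${\mathcal F}(x)$ under the isometric identifications \eqref{omegaA} of $\ell^1(\hatG)$ with a weighted space of sequences of operators and of $\ell^2(\hatG)$ with the GNS space of $\hat h_l$: one must verify that the cyclic vector $\Omega$ really is the identity on $H_\iota$ under these normalizations, and that convolution \eqref{convolution2} by it acts as the identity — this is where the factors $\qdim(v)$ and $F_v^{\pm1}$ in \eqref{orth1}, \eqref{uhat}, \eqref{omegaA}, \eqref{convolution2} must be tracked carefully. Everything else is formal. The statement and proof are standard (it is the quantum analogue of semisimplicity of the Fourier algebra $A(\Gamma)$), and in fact the one-line version is: $\pi$ restricted to $A(G)$ is the GNS representation of the faithful Haar state, hence faithful.
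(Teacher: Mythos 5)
Your argument is correct and is essentially the paper's own proof: evaluate $\lambda(\mathcal{F}(x))$ at the normalized vector of $\ell^2(\hatG)$ supported on the trivial representation, note that convolution by this unit returns $\mathcal{F}(x)$ inside $\ell^2(\hatG)$, and conclude from injectivity of $\mathcal{F}$ via the intertwining relation \eqref{UFpi}; the detour through faithfulness of the Haar state and separating vectors is not needed once you have this concrete computation. (Only the parenthetical claim that a faithful representation by bounded operators alone forces semisimplicity is inaccurate --- one needs the $^*$-structure, since radical elements are quasinilpotent and are therefore annihilated by every continuous $^*$-representation --- but as you apply it to the $^*$-representation $\pi$, the conclusion is unaffected.)
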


\begin{proof}
Prop. \ref{contractive} shows that the operator $\ell^1(\hatG)\mapsto \ell^2(\hatG)$, $x\to\lambda(x)\eta$, is contractive, where $\eta\in \ell^2(\hatG)$ is a normalized vector supported on the trivial representation. On the other hand this operator acts trivially on $\ell^1(\hatG)$, hence $\ell^1(\hatG)\subset\ell^2(\hatG)$.
If $\pi(x)=0$ for some $x\in A(G)$ then $\lambda({\mathcal F}(x))\eta=0$ by \eqref{UFpi}. Hence ${\mathcal F}(x)=0$ and this implies $x=0$.
\end{proof}

We denote by $\widehat{A(G)}$ the set of equivalence classes of topologically irreducible 
$^*$--representations of $A(G)$. Let $C^*(A(G))$ be the completion of $A(G)$ in the norm
$$\|x\|_{\text{max}}=\text{sup}_{\pi\in\widehat{A(G)}}\|\pi(x)\|,\quad\quad\quad x\in A(G).$$ 
The natural map
$A(G)\to C^*(A(G))$
is faithful and contractive.

\begin{thm}
$C^*(A(G))$ is a $C^*$--algebra and coincides with ${C_{\text{\rm max}}(G)}$.
\end{thm}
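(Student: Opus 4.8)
The plan is to show the two $C^*$--completions of $\Q_G$ coincide by exhibiting a norm-preserving correspondence between their defining families of representations. Recall that $C_{\text{max}}(G) = C^*(\Q_G)$ is by definition the completion of $\Q_G$ with respect to the supremum over all $^*$--representations of the Hopf $^*$--algebra $\Q_G$ on Hilbert spaces, while $C^*(A(G))$ is the completion with respect to the supremum over topologically irreducible $^*$--representations of the Banach $^*$--algebra $A(G)$. First I would invoke Prop.~\ref{estimate}: every $^*$--representation $\pi$ of $\Q_G$ satisfies $\|\pi(a)\|\le\|a\|_1$, hence is automatically $\|\cdot\|_1$--continuous and extends (uniquely, by density) to a contractive $^*$--representation of the Banach algebra $A(G)$. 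Conversely, any $^*$--representation of $A(G)$ restricts to a $^*$--representation of the dense subalgebra $\Q_G$. This sets up a bijection between $^*$--representations of $\Q_G$ and $^*$--representations of $A(G)$, compatible with unitary equivalence, irreducibility, and direct-sum decompositions, since the two carry the same operators.

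Next I would compare the two $C^*$--seminorms on $\Q_G$. For $a\in\Q_G$ one has
$$\|a\|_{\text{max}}^{A(G)}=\sup_{\pi\in\widehat{A(G)}}\|\pi(a)\|,\qquad \|a\|_{C_{\text{max}}(G)}=\sup_{\sigma}\|\sigma(a)\|,$$
where the second supremum runs over all $^*$--representations $\sigma$ of $\Q_G$. Since restricting to $\Q_G$ sends irreducible $^*$--representations of $A(G)$ to (not necessarily irreducible, but still) $^*$--representations of $\Q_G$, we get $\|a\|_{\text{max}}^{A(G)}\le\|a\|_{C_{\text{max}}(G)}$ immediately. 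For the reverse inequality I would use that every $^*$--representation $\sigma$ of $\Q_G$ is contractive for $\|\cdot\|_1$ by Prop.~\ref{estimate}, hence extends to $A(G)$, and a contractive $^*$--representation of a Banach $^*$--algebra decomposes (or at least is dominated, in operator norm on each vector, via its cyclic subrepresentations and a direct integral or purely algebraic argument) by topologically irreducible $^*$--representations; thus $\|\sigma(a)\|\le\sup_{\pi\in\widehat{A(G)}}\|\pi(a)\|=\|a\|_{\text{max}}^{A(G)}$. Combining the two inequalities gives equality of seminorms on $\Q_G$, so the completions coincide as $C^*$--algebras; in particular $C^*(A(G))$ is a $C^*$--algebra. (That $C^*(A(G))$ is a $C^*$--algebra is in fact automatic: it is by construction the enveloping $C^*$--algebra of the Banach $^*$--algebra $A(G)$, which exists because the family of bounded $^*$--representations separates points by Cor.~\ref{faithful} and the seminorm is finite since each $\pi$ is contractive.)

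The main obstacle is the reduction of an arbitrary (contractive) $^*$--representation to irreducible ones in the Banach-$^*$-algebra setting: for general Banach $^*$--algebras one must be a little careful, since not every $^*$--representation is a direct sum of irreducibles. The clean way around this is to observe that the supremum $\sup_{\pi\in\widehat{A(G)}}\|\pi(a)\|$ already equals $\sup_{\sigma}\|\sigma(a)\|$ over \emph{all} $^*$--representations $\sigma$ of $A(G)$, because the enveloping $C^*$--norm of any Banach $^*$--algebra is computed equally well from all $^*$--representations or from just the (topologically) irreducible ones -- this is the standard fact that $\|a\|_{\text{env}}=\sup\{\|\pi(a)\|:\pi\text{ irreducible}\}$, proved by passing to states on the enveloping $C^*$--algebra and using that pure states are weak-$*$ dense in the state space (Glimm/Kadison) or that irreducible representations separate the enveloping $C^*$--algebra. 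Granting this, the identification of the two completions is then simply the statement that the $^*$--representation theories of $\Q_G$ (as a $^*$--algebra) and of $A(G)$ (as a Banach $^*$--algebra) coincide, which is exactly what Prop.~\ref{estimate} provides. I would present the argument in this order: (i) recall both enveloping norms and that each is the sup over irreducibles equals the sup over all $^*$--representations; (ii) use Prop.~\ref{estimate} to identify the respective representation classes on the dense common subalgebra $\Q_G$; (iii) conclude the $C^*$--seminorms agree on $\Q_G$, hence the completions are the same $C^*$--algebra.
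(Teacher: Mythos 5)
Your argument is correct and is essentially the paper's own: both proofs rest on Prop.~\ref{estimate} to identify $^*$--representations of $\Q_G$ with $^*$--representations of $A(G)$, the paper phrasing this as the bijection $\pi\mapsto\pi\circ\iota$ between $\widehat{C_{\text{max}}(G)}$ and $\widehat{A(G)}$ and then reading off equality of the two norms on the dense subalgebra $\Q_G$. One small caution: since in the non-Kac case the involution on $A(G)$ is only densely defined, the ``standard Banach $^*$--algebra fact'' you invoke should be used in the form you mention parenthetically --- irreducible representations of the enveloping $C^*$--algebra compute its norm and restrict to topologically irreducible $^*$--representations of $A(G)$ --- which is precisely the route the paper takes through $\widehat{C_{\text{max}}(G)}$.
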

\begin{proof}
Since $\|x\|_{\text{max}}\leq \|x\|_1$, $\Q_G$ is dense in $C^*(A(G))$. Being a $C^*$--completion of a $^*$--algebra, $C^*(A(G))$ is a $C^*$--algebra. On the other hand, the map $\pi\to\pi\circ\iota$, with $\iota$ defined as in the Cor. \ref{iota}, establishes a bijective correspondence between $\widehat{C_{\text{\rm max}}(G)}$ and $\widehat{A(G)}$ by Prop. \ref{estimate}. We conclude that the norm of $C^*(A(G))$ equals the maximal norm of $C_{\text{max}}(G)$ on $\Q_G$.
\end{proof}
 
\section{Polynomial growth and $^*$--regularity}\label{growth}
 
If $G$ is a coamenable compact quantum group, the dense Hopf subalgebra $\Q_G$ admits a unique $C^*$--completion to a compact quantum group, since $C_{\rm max}(G)=C_{\rm red}(G)=C(G)$.
However, $\Q_G$ in general does not determine ${C(G)}$ as a $C^*$--algebra, as it often admits several $C^*$--norms. This can be seen already for the circle group, $G={\mathbb T}$, where the supremum norm on any infinite closed subset $C\subset {\mathbb T}$ gives a $C^*$--norm due to the fact that elements of $\Q_G$ are restrictions to the torus of analytic functions.
We thus need to replace $\Q_G$ by a larger $^*$--algebra. 
One of the results of this section is that for compact quantum groups of Kac type and of polynomial growth the Fourier algebra $A(G)$, when regarded as a subalgebra of $C(G)$, is the correct algebra, in that it does have a unique $C^*$--norm. As mentioned in the introduction, this is related to previous work on $^*$-regularity dating back to the '80s.

Recall that if $A$ is a (semisimple) Banach $^*$--algebra, the spectrum $\widehat{A}$ is the set of equivalence classes of topologically irreducible $^*$--representations of $A$ on Hilbert spaces. This is a $T_0$-topological space with the Jacobson topology, defined as follows. Let $\text{Prim}(A)$ denote the space of kernels of elements of $\widehat{A}$ endowed with the hull-kernel topology. We have a natural map
$$\kappa: \widehat{A}\to \text{Prim}(A)$$
associating a representation with its kernel, which is always surjective but may fail to be injective. For instance, if $A$ is a $C^*$--algebra, this map is injective if and only if $A$ is of type $I$. 
The Jacobson topology on $\widehat{A}$ is the weakest topology making $\kappa$ continuous.

In the framework of compact quantum groups, when $G={\rm SU}_q(d)$, it is known that $C(G)$ is of type $I$, and the Jacobson topology of $\widehat{C(G)}$ has been described, see \cite{NT} and references therein.
Consider, for a general Banach $^*$--algebra $A$, the continuous surjective map
$$\Psi_A: \text{Prim}(C^*(A))\ni \ker \pi \mapsto \ker \pi \cap A \in \text{Prim}(A).$$
\begin{defn}
$A$ is called $^*$--regular if $\Psi_A$ is a homeomorphism.
\end{defn}
    
Clearly, $\widehat{A}$ is in bijective correspondence with $\widehat{C^*(A)}$.  If $A$ is   $^*$--regular then the identification holds also at the level of topological spaces. In particular, if $G$ is a compact quantum group of Kac type, $^*$--regularity of $A(G)$ ensures that $\widehat{C_{\text{max}}(G)}$ identifies topologically with $\widehat{A(G)}$.

There are several statements equivalent to $^*$--regularity, such as asking that ${\mathcal I}\cap A$ be dense in ${\mathcal I}$ for every closed ideal of $C^*(A)$. The notion of $^*$--regularity is closely related to uniqueness of a $C^*$--norm. It is known that $A$ has a unique $C^*$--norm if and only if ${\mathcal I}\cap A\neq0$ for every nonzero ideal ${\mathcal I}$ as above and this implies that $^*$--regular Banach algebras have a unique $C^*$--norm.
Furthermore, $A$ is $^*$--regular if and only if all quotients $A/A\cap{\mathcal I}$ have a unique $C^*$--norm  \cite{Barnes83}.

The property of being $^*$--regular was first studied for the group algebra $L^1(\Gamma)$ of a locally compact group \cite{BLSV}. In particular, the authors show that if the Haar measure of $\Gamma$ has polynomial growth (that is for every compact subset $K\subset \Gamma$, $\mu(K^n)=O(n^N)$ for some integer $N$) then $L^1(\Gamma)$ is $^*$--regular.
The aim of this section is to show a non-commutative analogue of this result.
\begin{thm}\label{main}
Let $G$ be a compact quantum group of Kac type. If $\Q_G$ is of polynomial growth, then $A(G)$ is $^*$--regular.
\end{thm}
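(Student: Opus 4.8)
The plan is to establish, on the dense $^*$-subalgebra $\Q_G\subset A(G)$, the non-commutative counterpart of the polynomial growth estimate that underlies the classical $^*$-regularity theorem for $L^1(\Gamma)$ with $\Gamma$ of polynomial growth, and then to invoke the machinery of \cite{BLSV} (see also \cite{Barnes81,Barnes83,Boidol84}). First the set-up: by Theorem \ref{Fourier} we identify $A(G)$ with $\ell^1(\hatG)$, we recall that $C^*(A(G))=C_{\text{max}}(G)$ and, from Corollary \ref{faithful}, that $A(G)$ is a semisimple Banach $^*$-algebra; moreover $\Q_G$ of polynomial, hence subexponential, growth forces $G$ to be coamenable, so $C_{\text{max}}(G)=C_{\text{red}}(G)=C(G)$ and the assertion concerns the primitive ideal space of the Hopf $C^*$-algebra $C(G)$ itself. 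Recall that $^*$-regularity of $A(G)$ is equivalent to the requirement that $\mathcal I\cap A(G)$ be dense in $\mathcal I$ for every closed two-sided ideal $\mathcal I$ of $C_{\text{max}}(G)$. This property is inherited by the inductive limits of Proposition \ref{caratterization} — a closed ideal of an inductive limit of $C^*$-algebras is the closed linear span of its intersections with the members, which here are themselves compact quantum groups of finite topological dimension (hence of polynomial growth) and of Kac type. We may therefore assume that $G$ is a compact matrix quantum group; fix a self-conjugate generating representation $u$ and the length function $L(v)=\min\{n\geq0:v\subset u^{\otimes n}\}$ on $\hatG$, which satisfies $L(w)\leq L(v)+L(v')$ whenever $w$ occurs in $v\otimes v'$.

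The key step is the estimate
\begin{equation}\label{keyest}
\|a^n\|_{A(G)}\leq b(u_a,n)^{1/2}\,\|a\|_{C_{\text{max}}(G)}^{\,n},\qquad a\in\Q_G,\ n\geq 1,
\end{equation}
where $u_a$ is the representation whose irreducible components are exactly those occurring in $a$ and $b(u_a,n)$ is the Banica--Vergnioux sequence attached to the vector dimension. Indeed $a$ lies in the span $V$ of the coefficients of $u_a$, so $a^n\in V_n$, which by Lemma \ref{computegk} is the span of the coefficients of the finite set $S_n$ of inequivalent irreducible subrepresentations of $u_a^{\otimes k}$, $k\leq n$, with $\sum_{v\in S_n}\dim(v)^2=\dim(V_n)=b(u_a,n)$. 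Applying, for each $v\in S_n$, the elementary inequality $\Tr(|M|)\leq\dim(v)^{1/2}\Tr(M^*M)^{1/2}$ to $M=\mathcal F(a^n)(v)$, then the Cauchy--Schwarz inequality over $S_n$ and the Plancherel identity of Proposition \ref{contractive} (here the Kac hypothesis makes $F_v=I$, so the weights of the $\ell^1$- and $\ell^2$-norms match cleanly), one obtains
$$\|a^n\|_{A(G)}=\|\mathcal F(a^n)\|_{\ell^1(\hatG)}\leq b(u_a,n)^{1/2}\,\|a^n\|_{L^2(G)}.$$
Finally $\|a^n\|_{L^2(G)}^2=h((a^n)^*a^n)=\|\pi_h(a)^n\xi_h\|^2\leq\|\pi_h(a)\|^{2n}\leq\|a\|_{C_{\text{max}}(G)}^{2n}$, giving \eqref{keyest}. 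Since $\Q_G$ has polynomial growth, $b(u_a,n)=O(n^{\gamma_a})$ for some $\gamma_a>0$, so \eqref{keyest} is genuinely a $C^*$-norm-type polynomial bound: $\|a^n\|_{A(G)}\leq C_a(1+n)^{\gamma_a/2}\|a\|_{C_{\text{max}}(G)}^{\,n}$.

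With \eqref{keyest} at hand one runs the argument of \cite{BLSV}. For each $s\geq0$ the polynomially weighted space $A_s=\ell^1(\hatG,(1+L)^s)$ is a Banach $^*$-algebra — the weight $(1+L)^s$ is submultiplicative on the fusion ring and conjugation-invariant — and it contains $\Q_G$ as a dense $^*$-subalgebra; \eqref{keyest} holds in $A_s$ up to the extra, still polynomial, factor $(1+n)^s$, so each $A_s$ is symmetric with enveloping $C^*$-algebra $C_{\text{max}}(G)$. The inductive system $(A_s)_{s\geq0}$ carries a holomorphic functional calculus compatible with $C_{\text{max}}(G)$, which is exactly what is needed to show that every closed ideal $\mathcal I$ of $C_{\text{max}}(G)$ has dense intersection with $A(G)$: given $b\in\mathcal I$ and $a\in\Q_G$ with $\|a-b\|_{C_{\text{max}}(G)}$ small, the positive element $a^*a$ has small spectrum in the quotient $C_{\text{max}}(G)/\mathcal I$, so a suitable function of $a^*a$ — equal to the identity away from a neighbourhood of $0$ and vanishing near $0$ — lies in $A_s\cap\mathcal I\subset A(G)\cap\mathcal I$ and recovers $a^*a$ up to a small error; semisimplicity and symmetry of $A(G)$ then give $b\in\overline{A(G)\cap\mathcal I}$. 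Hence $A(G)$ is $^*$-regular.

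The heart of the matter is the estimate \eqref{keyest}: the Kac hypothesis enters essentially (already to make $A(G)$ an involutive Banach algebra, and to obtain the clean Plancherel bound with $F_v=I$), and so does polynomial growth, which is precisely what turns $b(u_a,n)$ into a polynomial. I expect the main technical obstacle to be the third paragraph — faithfully transcribing the $L^1$-theory of \cite{BLSV} to the discrete quantum setting, in particular checking that the polynomially weighted algebras $A_s$ and their functional calculus do suffice to recover arbitrary closed ideals of $C_{\text{max}}(G)$ — everything else being set-up or a routine adaptation of the classical arguments.
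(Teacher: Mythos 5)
Your key estimate $\|a^n\|_{A(G)}\le b(u_a,n)^{1/2}\,\|a\|_{C_{\max}(G)}^{\,n}$ is correct (and correctly derived from the $\ell^1$--$\ell^2$ comparison over the set $S_n$), and it is in fact the quantum analogue of the support estimate underlying \cite[Lemmas 5, 6]{Dixmier}, which is also the quantitative heart of the paper's proof. The genuine gap is in what you do with it. The route of \cite{BLSV}, which the paper follows, requires showing that every selfadjoint $f\in\Q_G$ has \emph{polynomial growth in $A(G)$}, i.e. $\|e^{i\lambda f}\|_1=O(|\lambda|^\gamma)$ as $|\lambda|\to\infty$: this is what makes Dixmier's smooth functional calculus $\varphi\{f\}=\frac{1}{2\pi}\int e^{i\lambda f}\hat{\varphi}(\lambda)\,d\lambda$ absolutely convergent in $A(G)$ and compatible with every $^*$-representation, after which $^*$-regularity follows from the criterion of \cite[Prop. 1]{BLSV} (one must show $\|\rho(f)\|\le\|\pi(f)\|$ whenever $\ker\pi\subset\ker\rho$). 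You never establish this polynomial bound on the exponentials, and it does not follow from your estimate by summing the series: $\sum_n\frac{|\lambda|^n}{n!}(1+n)^{\gamma/2}\|a\|^n$ is of order $e^{|\lambda|\,\|a\|}$, i.e. exponential in $\lambda$. One needs Dixmier's truncation trick: cut the exponential series at $N\sim|\lambda|$, note that the truncated sum lies in $V_N$ so your estimate applies with the polynomial factor $b(u_a,N)^{1/2}=O(|\lambda|^{\gamma/2})$ while its $\ell^2$-norm stays bounded (unitarity of $e^{i\lambda f}$ in $C(G)$ acting on $L^2(G)$), and control the factorial tail separately. This conversion is exactly what the paper's final lemma supplies and it is missing from your argument.

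In its place you assert that the weighted algebras $A_s=\ell^1(\hatG,(1+L)^s)$ are symmetric, have enveloping $C^*$-algebra $C_{\max}(G)$, and carry a holomorphic functional calculus compatible with $C_{\max}(G)$ (i.e.\ spectral invariance). Each of these is a theorem-level claim -- proved in the classical polynomial-growth setting only by substantial work -- and none is justified here; moreover, even granting them, a \emph{holomorphic} calculus cannot produce the cut-off function you invoke (a function equal to $1$ away from $0$ and vanishing near $0$ is not holomorphic on a connected spectrum), so the smooth calculus above is unavoidable. Two further soft points: your closing density step only approximates $a^*a\approx b^*b$ rather than $b$ itself, and ``semisimplicity and symmetry then give $b\in\overline{A(G)\cap\mathcal I}$'' is not an argument (the paper avoids this entirely by using the kernel-comparison criterion); and the preliminary reduction to compact matrix quantum groups via inductive limits, used only to manufacture the length function $L$, is itself asserted without proof and is unnecessary -- the paper's argument works directly with the finite set of irreducibles supporting a given $f\in\Q_G$, with no generating representation needed.
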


We will prove this by translating \cite{BLSV} so as to apply it to the Fourier algebra of a compact quantum group.
An important aspect of the original proof is the construction of a functional calculus for a dense subset of elements, which can be traced back to the work of Dixmier \cite{Dixmier} on nilpotent Lie groups. We extend these ideas to general Banach $^*$--algebras.

Let $A$ be a Banach algebra. Set $\tilde{A}=A$ if $A$ is unital and $\tilde{A}=A\oplus {\mathbb C}I$ otherwise. Given an element  $f\in A$, define
$$e^{if}=\sum_{k=0}^\infty \frac{(if)^k}{k!}\in\tilde{A}.$$
We shall say that $f$ has polynomial growth if there exists $\gamma>0$ such that
$$\|e^{i\lambda f}\|=O(|\lambda|^\gamma)\quad\quad\quad\text{for } |\lambda|\to+\infty.$$
The following lemma is a well-known  abstract reformulation of \cite[Lemma 7]{Dixmier}.

\begin{lemma} Let $A$ be a Banach $^*$--algebra. For any $C^\infty$-function $\varphi:{\mathbb R}\to{\mathbb C}$ with compact support and any element $f\in A$ of polynomial growth,  
\begin{itemize}
\item[{\rm a)}]
the integral
$$\varphi\{f\}:=\frac{1}{2\pi}\int_{-\infty}^{+\infty} e^{i\lambda f}\hat{\varphi}(\lambda) d\lambda$$
is absolutely convergent in $\tilde{A}$,
\item[{\rm b)}]
if $A$ is non-unital, $\varphi\{f\}\in A$ whenever $\varphi(0)=0$, 
\item[{\rm c)}]
for every $^*$--representation $\pi$ of $A$,
$$\pi(\varphi\{f\})=\varphi(\pi(f))$$
if $f=f^*$, where the right-hand side denotes the continuous functional calculus of the operator $\pi(f)$. 
\end{itemize}
\end{lemma}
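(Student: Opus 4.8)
The plan is to prove the three parts as a standard application of Fourier analysis for elements of polynomial growth, following Dixmier's approach. Throughout, $\hat\varphi(\lambda)=\int_{-\infty}^{+\infty}\varphi(t)e^{-i\lambda t}\,dt$ denotes the Fourier transform of the compactly supported $C^\infty$ function $\varphi$, so that the Fourier inversion formula $\varphi(t)=\frac{1}{2\pi}\int_{-\infty}^{+\infty}\hat\varphi(\lambda)e^{i\lambda t}\,d\lambda$ holds.

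\begin{proof}
\textbf{(a)} Since $\varphi\in C_c^\infty(\mathbb{R})$, repeated integration by parts shows that $\hat\varphi$ is rapidly decreasing: for every $N\in\mathbb{N}$ there is $C_N>0$ with $|\hat\varphi(\lambda)|\leq C_N(1+|\lambda|)^{-N}$. By hypothesis $f$ has polynomial growth, so there are $\gamma>0$ and $C>0$ with $\|e^{i\lambda f}\|\leq C(1+|\lambda|)^\gamma$ for all $\lambda$ (for $|\lambda|$ bounded this is automatic by continuity of $\lambda\mapsto e^{i\lambda f}$, which is norm-analytic). Choosing $N=\gamma+2$ we get $\|e^{i\lambda f}\hat\varphi(\lambda)\|\leq CC_N(1+|\lambda|)^{-2}$, which is integrable on $\mathbb{R}$; hence the integral defining $\varphi\{f\}$ converges absolutely in the Banach space $\tilde A$, being a Bochner integral of a norm-integrable $\tilde A$-valued function.

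\textbf{(b)} Write $e^{i\lambda f}=I+g(\lambda)$ with $g(\lambda)=\sum_{k\geq1}\frac{(i\lambda f)^k}{k!}\in A$, since $A$ is an ideal in $\tilde A$. Then
$$\varphi\{f\}=\frac{1}{2\pi}\Big(\int_{-\infty}^{+\infty}\hat\varphi(\lambda)\,d\lambda\Big)I+\frac{1}{2\pi}\int_{-\infty}^{+\infty}g(\lambda)\hat\varphi(\lambda)\,d\lambda.$$
By Fourier inversion the scalar coefficient of $I$ equals $\varphi(0)$, which vanishes by assumption; the remaining integral is an absolutely convergent Bochner integral with values in the closed subspace $A\subset\tilde A$, hence lies in $A$. (One checks absolute convergence exactly as in (a), using $\|g(\lambda)\|\leq e^{|\lambda|\,\|f\|}$ near $0$ and $\|g(\lambda)\|\leq 1+\|e^{i\lambda f}\|$ for large $|\lambda|$.)

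\textbf{(c)} Let $\pi$ be a $^*$-representation of $A$ on a Hilbert space $\mathcal H$; extend it unitally to $\tilde A$. Since $f=f^*$, the operator $\pi(f)$ is bounded and self-adjoint, and $\pi$, being a bounded homomorphism, commutes with the norm-convergent exponential series, so $\pi(e^{i\lambda f})=e^{i\lambda\pi(f)}$. Because $\pi$ is bounded and linear, it commutes with the Bochner integral, giving
$$\pi(\varphi\{f\})=\frac{1}{2\pi}\int_{-\infty}^{+\infty}e^{i\lambda\pi(f)}\hat\varphi(\lambda)\,d\lambda.$$
Applying this identity to the commutative $C^*$-algebra generated by $\pi(f)$ and $I$, i.e. evaluating via the spectral theorem / Gelfand transform on $\mathrm{sp}(\pi(f))\subset\mathbb{R}$, the right-hand side becomes the operator $\frac{1}{2\pi}\int_{-\infty}^{+\infty}e^{i\lambda\,(\cdot)}\hat\varphi(\lambda)\,d\lambda=\varphi(\cdot)$ by Fourier inversion; that is, $\pi(\varphi\{f\})=\varphi(\pi(f))$ in the sense of the continuous functional calculus.
\end{proof}

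The only genuinely delicate point is the uniform estimate in (a)--(b): one must combine rapid decay of $\hat\varphi$ (which is automatic from $\varphi\in C_c^\infty$) with the polynomial bound on $\|e^{i\lambda f}\|$ coming from the polynomial-growth hypothesis on $f$, to license the interchange of $\pi$ with the integral in (c). Everything else is a routine transcription of Dixmier's argument to the abstract Banach $^*$-algebra setting.
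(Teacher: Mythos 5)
Your proof is correct and is essentially the standard argument the paper merely invokes by citing Dixmier's Lemma 7 (the paper gives no proof of its own): rapid decay of $\hat{\varphi}$ against the polynomial bound on $\|e^{i\lambda f}\|$ for the Bochner integral, splitting off the $\varphi(0)I$ term via Fourier inversion for b), and pushing the integral through $\pi$ and the Gelfand transform for c). The only implicit points are the automatic boundedness of a $^*$--representation of a Banach $^*$--algebra (standard, via $\|\pi(a)\|^2\leq r(a^*a)\leq\|a\|^2$) and the convention that in c) one uses the canonical unital extension of $\pi$ to $\tilde{A}$; neither affects the argument.
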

We now give a straightforward abstraction of the main argument of   \cite{BLSV}.
\begin{lemma}
Let $A$ be a Banach $^*$--algebra admitting a subset of elements of polynomial growth dense in $A_{sa}$. Then $A$ is $^*$--regular.
\end{lemma}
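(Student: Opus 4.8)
The plan is to use the equivalent form of $^*$-regularity recalled just above the statement: it suffices to prove that $\mathcal{I}\cap A$ is norm-dense in $\mathcal{I}$ for every closed ideal $\mathcal{I}$ of $C^*(A)$. So I fix such an $\mathcal{I}$, write $q\colon C^*(A)\to C^*(A)/\mathcal{I}$ for the quotient map, and take $x\in\mathcal{I}$ and $\varepsilon>0$. Since closed ideals of $C^*$-algebras are $^*$-ideals, $x=x_1+ix_2$ with $x_1,x_2\in\mathcal{I}$ self-adjoint, so it is enough to approximate a self-adjoint $y\in\mathcal{I}$ by an element of $\mathcal{I}\cap A$. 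As $A$ is dense in $C^*(A)$ and the elements of polynomial growth are dense in $A_{sa}$ by hypothesis, I can choose a self-adjoint $a\in A$ of polynomial growth with $\|a-y\|<\eta$ in the $C^*$-norm, $\eta$ to be taken small compared with $\varepsilon$; then $\delta:=\|q(a)\|=\|q(a-y)\|\le\|a-y\|<\eta$, so the self-adjoint element $q(a)\in C^*(A)/\mathcal{I}$ has spectrum contained in $[-\delta,\delta]$.

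The key step is a functional-calculus correction of $a$ inside $A$. Put $M=\|a\|$ and pick a $C^\infty$ function $\varphi\colon\mathbb{R}\to\mathbb{R}$, supported in $[-2M,2M]$, with $\varphi(0)=0$, $\varphi\equiv 0$ on $[-\delta,\delta]$, $\varphi(t)=t$ for $2\delta\le|t|\le M$, and $|\varphi(t)|\le|t|$ for all $t$, so that $|\varphi(t)-t|\le 4\delta$ on $[-M,M]$. By the preceding lemma, $\varphi\{a\}$ is a well-defined element of $A$ (part (b), using $\varphi(0)=0$), and $\pi(\varphi\{a\})=\varphi(\pi(a))$ for every $^*$-representation $\pi$ of $A$ (part (c), applicable since $a=a^*$). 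Applying this with $\pi$ a faithful representation of $C^*(A)/\mathcal{I}$ composed with $q$, and using that $\varphi$ vanishes on $\operatorname{spec}(q(a))\subseteq[-\delta,\delta]$, gives $q(\varphi\{a\})=0$, i.e. $\varphi\{a\}\in\mathcal{I}\cap A$. Applying it instead with $\pi$ a faithful representation of $C^*(A)$, and using $\operatorname{spec}(a)\subseteq[-M,M]$ together with the estimate on $|\varphi(t)-t|$, gives $\|\varphi\{a\}-a\|\le 4\delta\le 4\eta$, whence $\|\varphi\{a\}-y\|\le 5\eta$. Recombining the two self-adjoint pieces produces an element of $\mathcal{I}\cap A$ within $10\eta$ of $x$; choosing $\eta$ small and letting $x,\varepsilon$ vary shows $\mathcal{I}\cap A$ is dense in $\mathcal{I}$, so $A$ is $^*$-regular.

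I expect the only real difficulty to be bookkeeping rather than conceptual: the single element $\varphi\{a\}$ must simultaneously be annihilated by $q$ and remain $C^*$-close to $a$, which forces the shape and support conditions on $\varphi$ to be balanced against both $\delta=\|q(a)\|$ and $M=\|a\|$, while the constraint $\varphi(0)=0$ must be kept so that part (b) applies in the possibly non-unital case. One also has to check that the two auxiliary maps used — $q$ followed by a faithful representation of the quotient, and a faithful representation of $C^*(A)$, both restricted to $A$ — are genuine $^*$-representations of $A$ to which part (c) applies, and that the self-adjoint approximant $a$ can be taken of polynomial growth, which is exactly where the hypothesis of the lemma is consumed. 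Beyond the functional-calculus lemma and the stated equivalent form of $^*$-regularity, no further input should be needed.
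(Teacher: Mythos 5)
Your argument is correct, but it runs along a different track than the paper's. The paper invokes the criterion of \cite[Prop. 1]{BLSV} (see also \cite[Prop. 1.3]{LN}): $^*$--regularity is equivalent to $\|\rho(f)\|\leq\|\pi(f)\|$ whenever $\ker\pi\subset\ker\rho$, and it then argues by contradiction, choosing a bump function $\varphi$ that vanishes on $\mathrm{Sp}\,\pi(f)$ but takes the value $1$ at $\pm\|\rho(f)\|$, so that $\varphi\{f\}$ is killed by $\pi$ but not by $\rho$; no quantitative approximation is needed. You instead target the dense-intersection reformulation (${\mathcal I}\cap A$ dense in ${\mathcal I}$ for every closed ideal ${\mathcal I}$ of $C^*(A)$), stated just before the definition in Section 6, and prove it directly by a quantitative correction: your $\varphi$ approximates the identity function away from a $\delta$-neighbourhood of $0$, so that $\varphi\{a\}$ lands in ${\mathcal I}\cap A$ while staying $C^*$--close to the given ideal element. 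Both proofs consume the hypothesis in exactly the same way (reduce to self-adjoint elements of polynomial growth, then apply parts b) and c) of the Dixmier-type functional calculus lemma), so the difference is in which equivalent form of $^*$--regularity is attacked and in the shape of $\varphi$. What your route buys is a constructive proof of the property actually advertised in the abstract (dense intersection with every closed ideal), at the price of relying on the unproved-in-the-paper equivalence of that property with $^*$--regularity; note, however, that this reliance is harmless, since your conclusion feeds back into the cited criterion in one line: if $\ker\pi\subset\ker\rho$ and $I=\ker\tilde\pi\subset C^*(A)$, density of $I\cap A=\ker\pi$ in $I$ forces $I\subset\ker\tilde\rho$, whence $\|\rho(f)\|\leq\|\pi(f)\|$. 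Two small bookkeeping points you should make explicit: the approximant $a$ must be taken self-adjoint (this is the intended reading of the hypothesis, as in the paper's own proof, and is what part c) requires), and the degenerate case $2\delta\geq M=\|a\|$ should be disposed of separately (there $y$ is within a multiple of $\eta$ of $0\in{\mathcal I}\cap A$, so nothing is needed).
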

\begin{proof}
By \cite[Prop. 1]{BLSV}, see also \cite[Prop. 1.3]{LN}, we need to show that $\|\rho(f)\|\leq\|\pi(f)\|$ holds for all $f\in A$ whenever $\pi, \rho$ are $^*$--representations of $A$ satisfying $\text{ker} \pi\subset \text{ker}\rho$.

By the $C^*$--property it suffices to show this for all selfadjoint elements $f$, and by our assumption we may also assume $f$ to be of polynomial growth. Assume on the contrary there exists such an $f\in A_{sa}$ with $\|\pi(f)\|<\|\rho(f)\|$. Let $\varphi$ be a positive $C^\infty$-function with compact support such that $\varphi(x)=0$ for $|x|\leq\|\pi(f)\|$ and $\varphi(\pm\|\rho(f)\|)=1$. Then $\varphi$ vanishes on $\text{Sp}\pi(f)$ and $\text{sup}\{\varphi(t), t\in\text{Sp}\rho(f)\}\geq1$ since $\pi(f)$ und $\rho(f)$ are selfadjoint operators. By the previous lemma,
$$\|\pi(\varphi\{f\})\|=\|\varphi(\pi(f))\|=\text{sup}\{\varphi(t); t\in\text{Sp}\,\pi(f)\}=0$$
and
$$\|\rho(\varphi\{f\})\|=\|\varphi(\rho(f))\|=\text{sup}\{\varphi(t); t\in\text{Sp}\,\rho(f)\}\geq 1.$$
This contradicts our assumption that $\text{ker}\pi\subset\text{ker}\rho$. 
\end{proof}

Dixmier showed that if $\Gamma$ is a unimodular locally compact group with Haar measure of polynomial growth then every continuous function $f\in C_c(\Gamma)$ with compact support has polynomial growth in $\widetilde{L^1(\Gamma)}$.
The following lemma is a non-commutative analogue.
 
\begin{lemma}
Let $G$ be a compact quantum group of Kac type. If    $\Q_G$ is of polynomial growth, then every selfadjoint $f\in\Q_G$ has polynomial growth in $A(G)$.
\end{lemma}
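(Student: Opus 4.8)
The goal is to show that for a selfadjoint $f\in\Q_G$ one has $\|e^{i\lambda f}\|_1 = O(|\lambda|^\gamma)$ as $|\lambda|\to\infty$, for some $\gamma>0$. The natural strategy is to bound the $\|\cdot\|_1$-norm of $e^{i\lambda f}$ in terms of a Hilbert-space operator norm times a polynomial, using polynomial growth of $\Q_G$ to control the ``spread'' of $e^{i\lambda f}$ across irreducible blocks. Concretely, let $u$ be a selfadjoint generating-type representation whose matrix coefficients span a finite dimensional subspace $V\ni f$ (we may enlarge $V$ so that $f\in V$; since $\Q_G$ has polynomial growth, $\dim(V_n)=O(n^N)$ for some $N$). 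First I would observe that $f^k\in V_k$, so each $e^{i\lambda f}$, when truncated, lives in $V_n$; but more usefully, I want an estimate on the $\ell^1$-norm of $f^k$ itself.

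**Key steps.** (1) Since $G$ is of Kac type, $F_v=I$ for all $v$, so the Fourier/$\ell^1$-norm of $a=\sum_v\sum_{ij}\lambda^v_{ij}v_{ij}$ is $\|a\|_1=\sum_v\Tr(|\Lambda_v^t|)=\sum_v\|\Lambda_v\|_1$, the sum of trace-norms of the coefficient matrices. By Cauchy--Schwarz on each block, $\|\Lambda_v\|_1\le \sqrt{\dim(v)}\,\|\Lambda_v\|_2 = \sqrt{\dim(v)}\,\big(\sum_{ij}|\lambda^v_{ij}|^2\big)^{1/2}$. (2) For an element $a\in V_n$, the only $v$'s that occur are the (finitely many) irreducible subrepresentations of $u^{\otimes k}$, $k\le n$; call this set $S_{u,n}$, and note $\sum_{v\in S_{u,n}}\dim(v)^2 = b(u,n)=\dim(V_n)=O(n^N)$ by Lemma~\ref{computegk}. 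Meanwhile $\big(\sum_v\sum_{ij}|\lambda^v_{ij}|^2\big)^{1/2}$ is, up to the normalization of the Haar-state orthogonality relations \eqref{orth1} (which in the Kac case give $h(v_{ij}^*w_{kl})=\delta_{vw}\delta_{ik}\delta_{jl}/\dim(v)$), comparable to $\|a\|_{L^2(G)}$ after reweighting — more precisely $\|a\|_2^2 = \sum_v \frac{1}{\dim(v)}\sum_{ij}|\lambda^v_{ij}|^2$. Combining, $\|a\|_1 \le \sum_{v\in S_{u,n}}\sqrt{\dim(v)}\|\Lambda_v\|_2 \le \big(\sum_{v\in S_{u,n}}\dim(v)^2\big)^{1/2}\big(\sum_v \frac{1}{\dim(v)}\|\Lambda_v\|_2^2\big)^{1/2} = b(u,n)^{1/2}\,\|a\|_2$, by Cauchy--Schwarz across the index $v$ with weights $\dim(v)$. (3) Now $e^{i\lambda f}$ is \emph{not} in any $V_n$, but its partial sums are; instead apply the bound to the full element: since $f=f^*$, the operator $\pi_h(f)$ on $L^2(G)$ is selfadjoint and bounded (it is left multiplication by $f$, bounded because $\|\pi_h(f)\|\le\|f\|_{C(G)}<\infty$), so $e^{i\lambda f}=\pi_h(e^{i\lambda f})\cdot 1$ has $\|e^{i\lambda f}\|_2\le 1$ for all $\lambda$. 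The problem is that $e^{i\lambda f}$ spreads over infinitely many blocks as $\lambda\to\infty$. The fix: approximate $e^{i\lambda f}$ by its Taylor polynomial $p_n(\lambda)=\sum_{k=0}^n (i\lambda f)^k/k!$, which lies in $V_n$; then $\|p_n(\lambda)\|_1\le b(u,n)^{1/2}\|p_n(\lambda)\|_2\le C n^{N/2}\cdot e^{|\lambda|\cdot\|f\|}$ which is too crude.

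**The real mechanism and the main obstacle.** The above naive splitting loses too much, so instead I would follow Dixmier's actual argument: use that for $v\in S_{u,n}$ the coefficient $\hat{e^{i\lambda f}}(v)$ of $e^{i\lambda f}$ in block $v$ equals $\frac{1}{\dim(v)}\cdot$(a block of $\pi_v(e^{i\lambda f})$ in a suitable sense), where $\pi_v$ is the $^*$-representation of $\Q_G$ generated by $v$ — here one uses that $e^{i\lambda f}$ is unitary, so $\|\pi_v(e^{i\lambda f})\|_{\B(H_v)}=1$ in operator norm, giving $\|\Lambda_v\|_1 \le \dim(v)\cdot\|\pi_v(e^{i\lambda f})\|=\dim(v)$ — wait, that needs the Fourier coefficient to be $\dim(v)^{-1}$ times something of operator norm $\le 1$. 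This is exactly what \eqref{uhat} together with Kac type gives: $\hat{a}(v)$ has the right normalization. Then $\|e^{i\lambda f}\|_1=\sum_{v\in S_{u,n(\lambda)}}\|\Lambda_v\|_1$ where $n(\lambda)$ is an \emph{effective} degree of approximation — and here is where polynomial growth enters decisively: one shows that $e^{i\lambda f}$ is, up to arbitrarily small $\|\cdot\|_1$-error, supported on $S_{u,n}$ with $n\sim C|\lambda|$ (because $f\in V$ implies multiplication by $f$ raises the ``degree'' by at most $1$ per application, and the factorial in $e^{i\lambda f}$ kills the tail beyond $n\sim e|\lambda|\|f\|$). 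On $S_{u,n}$ with $n\sim C|\lambda|$ one has $\sum_{v\in S_{u,n}}\|\Lambda_v\|_1\le\sum_{v\in S_{u,n}}\dim(v)\le \sqrt{|S_{u,n}|}\cdot b(u,n)^{1/2}\le b(u,n) = O(n^N)=O(|\lambda|^N)$. So $\gamma=N=\gkdim(\Q_G)=\dim(G)$ works. \textbf{The main obstacle} is making rigorous the claim that the $\|\cdot\|_1$-tail of $e^{i\lambda f}$ beyond degree $\sim C|\lambda|$ is negligible: this requires a quantitative estimate on $\|f^k\|_1$ (not just on $\dim(V_k)$), controlling how the coefficient mass of $f^k$ is distributed, and then summing the factorial series $\sum_{k>n} |\lambda|^k \|f^k\|_1/k!$ — this is precisely the technical heart of Dixmier's Lemma~7 transported to the quantum setting, and where the Kac hypothesis (ensuring $\|\cdot\|_1$ is the honest trace-norm sum and the orthogonality relations are untwisted) is used in an essential way. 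Once this tail estimate is in hand, the polynomial bound $\|e^{i\lambda f}\|_1=O(|\lambda|^{\dim G})$ follows, establishing polynomial growth of $f$ in $A(G)$.
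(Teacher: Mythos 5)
Your overall strategy is the right one, and it is essentially the paper's: the proof there reduces to $\ell^1(\hatG)$ via the Fourier transform and adapts Dixmier's Lemmas 5--6, i.e.\ exactly the Cauchy--Schwarz mechanism you set up in steps (1)--(2), with the ``measure'' of the support of a degree-$n$ element being $b(u,n)=\dim(V_n)=O(n^N)$ (Lemma \ref{computegk}) and the Kac hypothesis guaranteeing the untwisted $\ell^1$/$\ell^2$ norms. But as written your argument has a genuine gap: you never complete it. You explicitly leave the decisive point --- that the $\|\cdot\|_1$-mass of $e^{i\lambda f}$ beyond degree $n\sim C|\lambda|$ is negligible --- as an unproven ``main obstacle'', and you mislocate the difficulty. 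No estimate on how the coefficient mass of $f^k$ is distributed is needed: since $A(G)$ is a Banach algebra, $\|f^k\|_1\le\|f\|_1^k$, so for $n\ge 2e\|f\|_1|\lambda|$ Stirling gives $\|r_n\|_1:=\sum_{k>n}|\lambda|^k\|f^k\|_1/k!\le 1$; that is the entire tail estimate. Moreover, the mechanism you finally propose for the head (operator-norm bounds on the Fourier coefficients of the unitary $e^{i\lambda f}$, summed over $S_{u,n(\lambda)}$) is circular --- asserting that $e^{i\lambda f}$ is ``up to small $\|\cdot\|_1$-error supported on $S_{u,n(\lambda)}$'' \emph{is} the tail estimate --- and contains a slip: one gets $\|\Lambda_v\|_1\le\dim(v)^2\|a\|_\infty$, not $\dim(v)\|a\|_\infty$ (harmless for the exponent, but that route is not the one that closes the proof).

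What closes the argument is precisely the head/tail splitting you abandoned in step (3), with the $L^2$-bound done correctly instead of the crude $e^{|\lambda|\|f\|}$. Enlarge $V$ so that $1,f\in V$; the Taylor partial sum $p_n(\lambda)=\sum_{k\le n}(i\lambda f)^k/k!$ lies in $V_n$, so your step (2) gives $\|p_n\|_1\le b(u,n)^{1/2}\|p_n\|_{L^2(G)}$. Now $\|p_n\|_{L^2}\le\|e^{i\lambda f}\|_{L^2}+\|r_n\|_{L^2}$; since $\iota:A(G)\to C(G)$ is a contractive homomorphism (Cor.\ \ref{iota}) and $f=f^*$, the image of $e^{i\lambda f}$ is a unitary of $C(G)$, whence $\|e^{i\lambda f}\|_{L^2}\le 1$, while $\|r_n\|_{L^2}\le\|r_n\|_{C(G)}\le\|r_n\|_1\le 1$ (equivalently, use the contractive inclusion $\ell^1(\hatG)\subset\ell^2(\hatG)$ from the proof of Cor.\ \ref{faithful} together with Prop.\ \ref{contractive}). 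With $n\sim 2e\|f\|_1|\lambda|$ this yields $\|e^{i\lambda f}\|_1\le\|p_n\|_1+\|r_n\|_1\le 2\,b(u,n)^{1/2}+1=O(|\lambda|^{N/2})$, which is the required polynomial growth. This is exactly the content of the Dixmier lemmas the paper invokes; once you supply it, your proof coincides with the paper's, carried out on the $\Q_G$ side rather than on the dual side $\ell^1(\hatG)$.
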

\begin{proof}
By Prop. \ref{isomorphism} and the remarks preceding Corollary \ref{faithful}, it is enough to prove the statement for a selfadjoint element $g\in{\mathcal P}_{\hatG}$ and the Banach algebra $\ell^1({\hatG})$.

We adapt the proof of \cite[Lemmas 5, 6]{Dixmier} by replacing the role of a finite measure subset $A$ of $\Gamma$ with a finite set $F\subset\hatG$ of irreducible representations of $G$, and powers $A^p$ with the set of irreducible representations contained in $u_F^{\otimes p}$, where $u_F$ is the direct sum of elements of $F$. Choose now $F$ so that it contains the support of $g$. Finally, perform the same computations where the norms of $L^1(\Gamma)$ and $L^2(\Gamma)$ are replaced by the norms of $\ell^1(\hatG)$,
$\ell^2(\hatG)$ mentioned in the previous section.
\end{proof}

The proof of Theorem \ref{main} is now complete.

\begin{rem}
Examples of Kac-type compact quantum groups that are not $^*$--regular are provided by non-amenable discrete groups. Indeed, for any such $\Gamma$, $\ell^1(\Gamma)$ is not $C^*$--unique. As a matter of fact, it is not as easy to find examples among amenable \emph{discrete} groups. Indeed, on the one hand the relationship between $^*$--regularity and $C^*$--uniqueness has been studied at an in-depth level in  \cite{Barnes83}.
On the other, it is not known whether every such group is automatically $C^*$--unique, see \cite{LeungNg} for partial positive results.
The most natural candidate to disprove this conjecture is the so-called Grigorchuk group as remarked in \cite{GMR}. 
Moreover, giving workable examples of coamenable Kac-type compact quantum groups, beyond the cocommutative or commutative cases, is not easy either, see  the recent paper \cite{NY} and references therein.
\end{rem}

\section{Remarks on the non-Kac-type case}\label{nonkac}

We have already noticed that $A(G)$ is no longer a natural involutive algebra in the non-Kac-type case. Nonetheless, $A(G)$ can still be made into an involutive Banach algebra, as we next explain. 
\bigskip

\subsection{The $^\bullet$-involution}

Taking into account the modular structure of $\hatG$, the new involution is defined by
$$a^{\bullet}=(\tau(a))^*, \quad\quad\quad a\in\Q_G,$$
where $\tau$ is the automorphism of $\Q_G$ given by
$$\tau: u_{\xi,\eta}\in\Q_G\to u_{F^{-1/2}\xi, F^{1/2}\eta}\in \Q_G,\quad\quad\quad u\in\hatG.$$
This is of course the Woronowicz automorphism arising from the polar decomposition of the antipode of $\Q_G$. Thus $a^\bullet=a^*$ precisely when $G$ is of Kac type.
  
\begin{thm}
The involution $a\to a^\bullet$ makes $A(G)$ into an involutive Banach algebra.
\end{thm}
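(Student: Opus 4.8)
The plan is to show that $a \mapsto a^\bullet$ is an isometric, antimultiplicative, involutive conjugate-linear map on $\Q_G$ that extends continuously to $A(G)$. First I would verify the purely algebraic properties. Conjugate-linearity is immediate from conjugate-linearity of $a \mapsto a^*$ and linearity of $\tau$. That $(a^\bullet)^\bullet = a$ follows because $\tau$ is a $^*$-automorphism commuting with the appropriate structure: explicitly, $\tau(a)^* $ applied twice gives $\tau(\tau(a)^*)^* = \tau(\tau(a^*))^{**}$, and one checks $\tau \circ * \circ \tau \circ * = \mathrm{id}$ using that $\tau(u_{\xi,\eta})^* = \overline{u}_{jF^{-1/2}\xi, (j^*)^{-1}F^{1/2}\eta}$ together with the compatibility $F_{\overline u} = (jj^*)^{-1}$, $F_u = j^*j$ recalled before Corollary~\ref{faithful}. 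For antimultiplicativity, $(ab)^\bullet = \tau(ab)^* = (\tau(a)\tau(b))^* = \tau(b)^*\tau(a)^* = b^\bullet a^\bullet$, using that $\tau$ is an algebra homomorphism and $*$ is antimultiplicative.

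The crucial analytic point is that $a \mapsto a^\bullet$ is $\|\cdot\|_1$-isometric, so that it extends from $\Q_G$ to $A(G)$. Here I would pass through the Fourier transform of Proposition~\ref{isomorphism} and compute the effect of $\bullet$ on $\ell^1(\hatG)$. Since $\tau(u_{\xi,\eta}) = u_{F^{-1/2}\xi, F^{1/2}\eta}$, formula \eqref{uhat} gives $\mathcal F(\tau(u_{\xi,\eta}))(v) = \frac{1}{\qdim(v)}\delta_{u,v}\Theta_{F^{1/2}\eta, F^{-1/2}\xi}F_v$, and combining with the formula for $\mathcal F(a^*)$ worked out before Corollary~\ref{faithful}, one obtains an explicit expression for $\mathcal F(a^\bullet)$ in terms of $\mathcal F(a)$, namely a map on each block $\mathcal B(H_v)$ of the form $A_v \mapsto F_{\overline v}^{1/2}\,(\text{adjoint of } A_v \text{ transported by } j_v)\,F_{\overline v}^{1/2}$ up to the $\qdim$-weights. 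The point of inserting the modular factors $F^{\mp 1/2}$ is precisely that this twisted adjoint is trace-norm preserving with respect to the weighted trace $\sum_v \qdim(v)\Tr(|A_v F_v^{-1}|)$ defining $\|\cdot\|_1$: the symmetric placement of $F^{1/2}$ on both sides balances the asymmetry between $F_v$ and $F_v^{-1}$ in the orthogonality relations \eqref{orth1}, \eqref{orth2}. So the computation reduces to checking that $\Tr(|F_{\overline v}^{1/2} X^* F_{\overline v}^{1/2} F_{\overline v}^{-1}|) = \Tr(|X F_v^{-1}|)$ for $X \in \mathcal B(H_v)$, which is a standard fact about singular values under conjugation by positive operators.

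Finally, I would assemble these: isometry gives the continuous (indeed isometric) extension of $\bullet$ to all of $A(G)$; antimultiplicativity, conjugate-linearity and the involution identity pass to the completion by density and continuity; and the submultiplicativity $\|xy\|_1 \le \|x\|_1\|y\|_1$ already known from Proposition~\ref{isomorphism} (equivalently, from $\ell^1(\hatG)$ being a Banach algebra) together with $\|x^\bullet\|_1 = \|x\|_1$ makes $A(G)$ a Banach $^*$-algebra in the sense required. The main obstacle I anticipate is purely computational rather than conceptual: correctly tracking the modular operators $F_v$ versus $F_{\overline v}$ and the intertwiner $j_v$ through the Fourier transform, and verifying that the $F^{\mp 1/2}$ twist in $\tau$ is exactly what is needed to turn the non-isometric map $a \mapsto a^*$ into the isometric $a \mapsto a^\bullet$. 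One should double-check the normalisation convention $\Tr(F_v) = \Tr(F_v^{-1})$ does not introduce spurious scalars, but since $\bullet$ is built from $\tau$ and $*$ which individually respect the block structure, no global rescaling appears.
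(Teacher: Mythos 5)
Your algebraic part (conjugate-linearity, antimultiplicativity via the automorphism $\tau$, and the sketch of $a^{\bullet\bullet}=a$) is fine, and the overall strategy -- prove $\|a^\bullet\|_1=\|a\|_1$ on $\Q_G$ and extend by density -- is the same as the paper's. But the crucial analytic step, the isometry, is not actually established in your argument. The identity you propose to reduce to, $\Tr\bigl(\bigl|F_{\overline v}^{1/2}X^*F_{\overline v}^{1/2}F_{\overline v}^{-1}\bigr|\bigr)=\Tr\bigl(|XF_v^{-1}|\bigr)$, justified as ``a standard fact about singular values under conjugation by positive operators,'' is false as a general principle: singular values (hence trace norms) are preserved under unitary or antiunitary conjugation, \emph{not} under conjugation by positive invertible operators. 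Already for $2\times 2$ matrices, with $F=\mathrm{diag}(t,t^{-1})$ and $X$ the all-ones matrix, one finds $\Tr\bigl(|F^{1/2}XF^{-1/2}|\bigr)=\sqrt{2+t^2+t^{-2}}$ while $\Tr\bigl(|XF^{-1}|\bigr)=\sqrt{2(t^2+t^{-2})}$, which differ for $t\neq 1$. So the ``symmetric placement of $F^{1/2}$'' does not by itself balance anything, and as stated your key step would fail.

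What actually makes the isometry work -- and what the paper's proof isolates -- is the \emph{antiunitary} part of $j_v$: write the polar decomposition $j_v=J_vF_v^{1/2}$ with $J_v\colon H_v\to H_{\overline v}$ antiunitary. Then the modular twist in $\tau$ cancels the $F_v^{1/2}$ factor exactly, giving $u_{\xi,\eta}^\bullet=\overline u_{J_u\xi,J_u\eta}$ (and $J_{\overline u}=J_u^{-1}$, whence $a^{\bullet\bullet}=a$). Consequently, on the coefficient matrices one gets $\tilde\Lambda_{\overline v}^t=J_v\Lambda_v^tJ_v^*$ -- equivalently, on the de-weighted Fourier blocks, $\widehat{a^\bullet}(\overline v)F_{\overline v}^{-1}=J_v\bigl(\hat a(v)F_v^{-1}\bigr)J_v^*$ -- so the involution is conjugation by an antiunitary block by block, and $|J_vYJ_v^*|=J_v|Y|J_v^*$ gives $\|a^\bullet\|_1=\|a\|_1$ immediately (the $\qdim$ weights are untouched since $\qdim(\overline v)=\qdim(v)$). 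Your route through $\ell^1(\hatG)$ is perfectly viable, but you must carry out this computation and exhibit the antiunitary $J_v$; no appeal to positivity of the $F$-factors can replace it.
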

\begin{proof}
Recall from Subsect. \ref{PolyGK} that  if $j_u: H_u\to H_{\overline{u}}$ defines a conjugate of the irreducible representation $u$ then $u_{\xi,\eta}^*=\overline{u}_{j\xi, {j^*}^{-1}\eta}$. Let $j_u=J_uF_u^{1/2}$ be the polar decomposition of $j_u$. Then $J_u: H_u\to H_{\overline{u}}$ is antiunitary and we have
$$u_{\xi,\eta}^\bullet=\overline{u}_{J_u\xi, J_u\eta}, \qquad J_{\overline{u}}=J_u^{-1},$$
hence $a^{\bullet\bullet}=a$ for $a\in\Q_G$. Explicit computations show that, if $a\in\Q_G$ is determined by the matrices $\Lambda_v=(\lambda^v_{i,j})$, that is $a=\sum_v \lambda^v_{i,j} u_{i, j}$, $a^\bullet$ is then determined by the matrices $\tilde{\Lambda}_v$ where $\tilde{\Lambda}_v^t=J_v\Lambda_v^tJ_v^{*}$.
Hence 
$|\tilde{\Lambda}_v^t|=J_v|\Lambda_v^t|J_v^{*}$ and this, together with antiunitarity of $J_v$, implies $\|a^\bullet\|_1= \|a\|_1$.
\end{proof}

Investigating semisimplicity, $^*$--regularity or $C^*$--uniqueness of the $^*$-algebra $A(G)$ seems an interesting problem. Yet, $A(G)$
seems only loosely related to topological dimension. More precisely, the study of the relationship between dimension and $^*$--regularity for ${\rm SU}_q(2)$  might involve a quantum analogue of the Beurling-Fourier algebra \cite{LS, LST} with weights of exponential type, of which little is known already in the case of classical compact groups. These issues are beyond the scope of the present paper and will be dealt with elsewhere.
 
\bigskip

\noindent{\it Acknowledgements} We would like to thank Mikael R\o rdam for a conversation related to the upcoming paper \cite{GMR}.
We acknowledge support by AST fundings from ``La Sapienza'' University of Rome.

\bigskip

\vfill
\end{document}